\definecolor{darkgreen}{cmyk}{1,0,1,.2}
\definecolor{m}{rgb}{1,0.1,1}
\newdimen\theight
\def\TeXref#1{%
             \leavevmode\vadjust{\setbox0=\hbox{{\tt
                     \quad\quad  {\small \textrm #1}}}%
             \theight=\ht0
             \advance\theight by \lineskip
             \kern -\theight \vbox to
             \theight{\rightline{\rlap{\box0}}%
             \vss}%
             }}%
\theoremstyle{plain}
\newtheorem{thm}{Theorem}
\newtheorem{lem}[thm]{Lemma}
\newtheorem{cor}[thm]{Corollary}
\newtheorem{prop}[thm]{Proposition}
\theoremstyle{definition}
\newtheorem{ex}[thm]{Example}
\theoremstyle{remark}
\newtheorem{rem}{Remark}
\newcommand{\ZZ}{{\mathbb  Z}}
\newcommand{\NN}{{\mathbb N}}
\newcommand{\GG}{{\mathcal G}}
\newcommand{\gromov}{\widehat{\GG}_*}
\newcommand{\Aut}{\mathop{\rm Aut}}
\newcommand{\id}{\mathop{\rm id}}
\DeclareMathOperator{\im}{im}
\DeclareMathOperator{\dom}{dom}
\numberwithin{equation}{section}
\newlist{propenum}{enumerate}{1} 
\setlist[propenum]{label=\emph{(\alph*)}, ref=\theprop~(\alph*)}
\newlist{defnenum}{enumerate}{1} 
\setlist[defnenum]{label=\emph{(\alph*)}, ref=\thedefn~(\alph*)}
\newlist{lemenum}{enumerate}{1} 
\setlist[lemenum]{label=\emph{(\alph*)}, ref=\thelem~(\alph*)}
\newlist{thmenum}{enumerate}{1} 
\setlist[thmenum]{label=\emph{(\alph*)}, ref=\thethm~(\alph*)}
\title{Genericity of chaos for colored graphs}
\author[R. Barral Lij\'o]{Ram\'on Barral Lij\'o}
\address{
	Research Organization of Science and Technology\\
	Ritsumeikan University\\
	Nojihigashi 1-1-1, Kusatsu, Shiga, 525-8577, Japan}
\email{ramonbarrallijo@gmail.com}
\author[Hiraku Nozawa]{Hiraku Nozawa}
\address{Department of Mathematical Sciences\\
	Colleges of Science and Engineering\\
	Ritsumeikan University\\
	Nojihigashi 1-1-1, Kusatsu, Shiga, 525-8577, Japan}
\email{hnozawa@fc.ritsumei.ac.jp}
\keywords{graph coloring, symbolic dynamics, Cantor set, subshift, chaos theory}
\subjclass[2010]{37B10, 37D45, and 05C15}
\date{\today}
\begin{document}

\maketitle

\begin{abstract} 
    To each colored graph, one can associate its closure in the universal space of isomorphism classes of pointed colored graphs, and this subspace can be regarded as a generalized subshift. Based on this correspondence, we extend the notion of chaotic dynamical systems to colored graphs. We introduce definitions for chaotic and almost chaotic (colored) graphs, and prove their topological genericity in various subsets of the universal space. 
\end{abstract}

\section{Introduction}

\emph{Chaotic dynamical systems} are one of central objects in the study of modern mathematics.
The aim of this paper is to study 
an analogous notion
in a setting that generalizes both symbolic dynamics and one-dimensional foliated spaces: the universal space of pointed colored graphs $\gromov$.  In what follows, we offer a very brief account on both of these mathematical fields.

Given a countable group $G$ and a finite set $F$ of colors, $G$ acts naturally by the left on the Cantor space $F^G$ of $F$-valued colorings on $G$ by the formula $(g\cdot \phi)(g') = \phi \big(g^{-1}g'\big)$. A closed, saturated subset of $F^G$ is called a \emph{subshift}, and they constitute the main subject of study in symbolic dynamics~\cite{LindMarcus}.
For every $\phi\colon G\to F$, the closure of its orbit under the $G$-action is a subshift.
In symbolic dynamics, properties similar to those that characterize chaos in a continuous setting have been studied for many years, e.g.~\cite{Ceccherini}. It was shown in~\cite{Fiorenzi} that, for $F$ finite, $F^G$ has density of periodic configurations if and only if $G$ is residually finite.

A \emph{foliated space} or \emph{lamination} is a topological space endowed with a partition into connected objects that locally are stacked together as a product. Usually, we require the elements of the partition (the \emph{leaves}) to be connected manifolds of a fixed dimension. However, we may also consider \emph{laminations by graphs} (also called \emph{graph matchbox manifolds}, see~\cite{Lukina}).

It is well-known that foliated spaces can be regarded as dynamical objects where the leaves play the role that  in a classical dynamical system would belong to the orbits. This is usually illustrated by the following picture (see e.g.~\cite[Chapter~2]{CandelConlonI}): Consider a path $\sigma\colon [0,1]\to X$ in the foliated space that  is contained in a single leaf and with starting point $\sigma(0)=x$. Then the local product structure enforces points close to $x$ to  follows paths that ``shadow" $\sigma$. The ambivalence implicit in the previous portrayal (e.g.\ the choice of ``shadowing paths'') already indicates that the dynamical representation of our foliated space (the \emph{holonomy pseudogroup}) depends on various choices, see \cite[Prop.~2.2.6]{CandelConlonI}. It is therefore far from unique. Nevertheless, there is a suitable notion of equivalence, originally introduced by Haefliger \cite{Haefliger} for foliated manifolds, that encompasses all such representations of the same foliated space.

Both laminations by graphs and symbolic dynamical systems admit a common generalization: the \emph{Gromov space of pointed colored graphs}, denoted by $\gromov$. We use this terminology because of its similarity with the Gromov space of pointed metric spaces,  see~\cite{BuragoBuragoIvanov}. As a set, $\gromov$ consists of pointed isomorphis classes $[X,x,\phi]$ of triples $(X,x,\phi)$, where $X$ is a connected graph with finite vertex degrees, $x\in X$ is a distinguished point, and $\phi\colon X \to \Xi$ is a (vertex-)coloring. For the purposes of this paper, we assume that $\Xi$ is a Cantor space. By an isomorphism $h\colon (X,x,\phi)\to (Y,y,\psi)$ we mean a graph isomorphism $h\colon X\to Y$ such that $h(x)=y$ and $\phi=\psi\circ h$.
In \cite{AldousLyons} the reader can enjoy a nice survey on both properties of $\gromov$ and open problems in the subject.

Associated to every colored graph $(X,\phi)$ there is a canonical map $\ell_{X,\phi}\colon X\to \gromov$, defined by the formula $x\mapsto [X,x,\phi]$. Let $[X,\phi]$ denote the isomorphism class of the colored graph $(X,\phi)$. We have $\im(\ell_{X,\phi})=\im(\ell_{Y,\psi})$ if $[X,\phi]=[Y,\psi]$, and $\im(\ell_{X,\phi})\cap\im(\ell_{Y,\psi})=\emptyset$ otherwise. By abuse of notation, we may identify the subset $\im(\ell_{X,\phi})$ and the equivalence class $[X,\phi]$. Here $[X,\phi]$ can be identified with $X/\Aut(X,\phi)$, which has a canonical graph structure. Therefore $\gromov$ is a space partitioned into graphs where each ``leaf" corresponds to an equivalence class $[X,\phi]$; it can be regarded as a singular lamination by graphs.
This is analogous to the ``smooth'' case, where one considers a Gromov space of Riemannian manifolds (see~\cite{AbertBiringer-unimodular,AlvarezBarralCandel2016,AlvarezBarral2017}). 

By the preceeding discussion,  one can associate to each colored graph $[X,\phi]$ the (generalized) dynamical system $\overline{[X,\phi]}\subset \gromov$. Here, we consider a natural topological structure of $\gromov$ given by the following notion of convergence: let  $[X,x,\phi]$ be a pointed, colored graph, and let $D_X(x,r)$ denote the \emph{disk} or \emph{closed ball} of center $x$ and radius $r$. Then $[X,x,\phi]$ is the limit of the sequence $[X_n,x_n,\phi_n]$ if, for every $r\in \mathbb{N}$ and $\epsilon>0$, there are graph isomorphisms 
\[
h_n\colon D_{X_n}(x_n,r ) \to D_{X}(x_,r )\;,
\]
defined for $n$ large enough, so that $h_n(x_n)=x$ and $d(\phi_n(y),\phi(h_n(y)))\leq \epsilon$ for all $v\in D_{X_n}(x_n,r)$.
This is in the same spirit as e.g.\ tiling theory, where in order to study a particular tiling one considers its continuous hull~\cite{ForrestHuntonKellendonk}. Then we may think of dynamical properties of $\overline{[X,\phi]}$ as attributes of $[X,\phi]$, since of course the latter determines the former. All these considerations guide us to the following principle: we shall say that $[X,\phi]$ is a $\emph{chaotic}$ colored graph if $\overline{[X,\phi]}\subset \gromov$ is chaotic as a generalized dynamical system, the precise description of which will be made explicit shortly.

Our setting is a generalization of symbolic dynamics in the following sense: If we fix the underlying graph to be the Cayley graph of a countable group $G$ with a system of generators,  then we can canonically identify each class $[G,g,\phi]$ with $[G,e_G,\phi\circ R_{g^{-1}}]$. Note that we are considering Cayley graphs with unlabeled edges. If we restrict our attention to graph isomorphisms given by translations (instead of the full isomorphism group of the Cayley graph), we recover the notion of subshift. 

One can define analogously a space $\mathcal{G}_*$ of isomorphism classes of pointed graphs $[X,x]$ with canonical injections $\ell_X\colon X\to \mathcal{G}_*$. All the preceeding considerations apply in a similar manner.

The usual definition of a chaotic dynamical system involves three conditions: sensitivity on initial conditions, topological transitivity, and density of closed orbits~\cite{Devaney}. The first one, requiring sensitivity on initial conditions, is usually formulated in terms of a Lyapunov exponent~\cite{Wolf}. 
Usually one needs to fix a harmonic measure on foliated spaces to define a Lyapunov exponent (see e.g. \cite{Deroin,Matsumoto}). Since we focus on a purely topological aspect of chaos in this article,
we will  ignore this condition  in our definition of chaos, cf.~\cite{Cairn}. Following the heuristic rule that orbits should correspond to leaves, it is natural to consider the following definitions: a colored graph $[X,\phi]$ is \emph{quasi-transitive} if $\im (\ell_{X,\phi})$ is a finite set, $[X,\phi]$ is \emph{(topologically) almost chaotic} if the subset of quasi-transitive classes is dense in $\overline{[X,\phi]}$. Note that we say almost chaotic, since in principle this would also encompass degenerate situations such as spaces consisting of a single compact leaf.  We say that $[X,\phi]$ is \emph{(topologically) chaotic} if it is almost chaotic, not quasi-transitive and \emph{aperiodic}, i.e.\  $\Aut(X,\phi)=\{\id\}$. 

The concepts that we have introduced thus far can be illustrated by the following simple examples,  the first of which is inspired by Champernowne's number \cite{Champernowne}.

\begin{ex}\label{ex.01}
Consider the lexicographical order on the set of finite sequences of $0$s and $1$s:
\[
0 < 1 < 00 < 01 < 11 < 000 < 001 < \cdots\;.
\]
We concatenate these sequences in order to construct the infinite sequence
\[
a_1 a_2 a_3 a_4 a_5 a_6 \cdots =01000111000001 \cdots\;.
\]
Let $\phi$ be the coloring of $\mathbb{Z}$ with two colors, $0$ and $1$, defined by
\[
\phi(n)=\begin{cases}
\;0 &\text{for } n<0\;,\\
\;a_n &\text{for } n\geq 0\;.
\end{cases}
\]
Then $\phi$ is a chaotic coloring of the integers. Indeed, since the non-positive integers determine the only infinite ray colored by $0$ and adjacent to a $1$, it is clear that $(\ZZ, \phi)$ is aperiodic. To show that $(\ZZ, \phi)$ is almost chaotic it is enough to note that, for every finite word $w$ with values in $\{0,1\}$, every concatenation $w^n$ will eventually appear on $a_n$.
\end{ex}

\begin{figure}[tb]
	\includegraphics{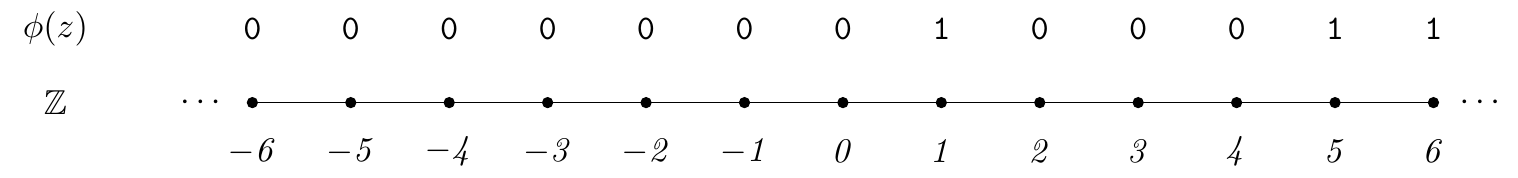}
	\caption{An illustration of \Cref{ex.01}}
\end{figure}

From the last example $(\ZZ, \phi)$, we can construct a chaotic graph $X$ without coloring. Indeed, such graph $X$ is obtained by adding a free edge to each vertex of $\ZZ$ colored with $1$ and by forgetting the coloring $\phi$.

The following is a straightforward generalization of \Cref{ex.01}.

\begin{ex}
Let $C\subset \Xi$ be a countable subset  with at least two elements. Let 
\[
\mathcal{W}=\{ c_{i_{1}} c_{i_{2}} \cdots c_{i_{k}} \mid c_{i_{j}} \in C\}
\]
be the set of finite words over the alphabet $C$. Fix a well-order $\mathcal{W}=\{w_n\}_{n\in\mathbb{N}}$  and take $c_{0} \in C$. Now we concatenate the elements of $\mathcal{W}$ using the well-order to define a coloring $\phi$ on $\ZZ$. Namely,
\[
\phi(n)=\begin{cases}
\, c_0 &\text{for }n<0\;,\\
\, w_m(i) &\text{for } n=i+\sum\nolimits_{l<m}|w_l|,\, 1\leq i\leq |w_m|\;;
\end{cases}
\]
where $|w|$ denotes the length of the corresponding finite word. The proof that $\phi$ is chaotic proceeds as in \Cref{ex.01}.
\end{ex}

The purpose of this paper is to study the topological genericity of the aforementioned notion of chaos for several subsets of $\gromov$. We will use the following notation. Let $\Delta\in \mathbb{N}$, and let $\gromov(\Delta)$ denote the subset of classes whose underlying graph $X$ satisfies $\deg X\leq \Delta$, and similarly for  $\mathcal{G}_*(\Delta)$. Also, we add $\infty$ as a superscript to denote the subset of classes whose underlying graph is infinite.

Recall that a property is topologically generic if it holds on a \emph{residual subset}, where a subset is residual if it contains a countable intersection of open dense sets. This notion is well-behaved for  \emph{Baire spaces}. \emph{Polish spaces}, i.e.\ separable and completely metrizable spaces, are all Baire spaces by the Baire Category Theorem. The following theorem is  the main result of the paper.

\begin{thm}\label{t.one}
	The following subsets are generic in the respective ambient spaces:
	\begin{thmenum}
	\item \label{i.ac} the  almost chaotic classes in $\mathcal{G}_*^\infty$, $\mathcal{G}_*^\infty(\Delta)$, $\gromov^\infty$ and $\gromov^\infty(\Delta)$, for $\Delta\geq 3$;
	\item \label{i.ap} the  aperiodic classes in  $\gromov^\infty$, $\gromov^\infty(\Delta)$; and
	\item \label{i.ch} the chaotic classes in $\gromov^\infty$ and $\gromov^\infty(\Delta)$ for $\Delta\geq 3$.
	\end{thmenum}
\end{thm}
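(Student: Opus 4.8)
The plan is to verify, in each ambient space, that the relevant property holds on a residual set, using that all four spaces are Baire. Indeed, infinite pointed graphs form a \emph{closed} subset of the Polish spaces $\gromov$, $\gromov(\Delta)$, $\mathcal{G}_*$, $\mathcal{G}_*(\Delta)$: if $D_X(x,d)=D_X(x,d+1)$ then $X=D_X(x,d)$ is finite, and the isomorphism of radius-$(d{+}1)$ balls guaranteed by convergence forces any nearby class to be finite too, so finiteness is open. Hence $\gromov^\infty$, $\gromov^\infty(\Delta)$, $\mathcal{G}_*^\infty$, $\mathcal{G}_*^\infty(\Delta)$ are themselves Polish, hence Baire. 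Throughout I will use the identification $\im(\ell_{X,\phi})\cong X/\Aut(X,\phi)$, which shows that on the infinite spaces aperiodicity already implies non-quasi-transitivity: if $\Aut(X,\phi)=\{\id\}$ then $\ell_{X,\phi}$ is injective with infinite domain, so $\im(\ell_{X,\phi})$ is infinite. Consequently \ref{i.ch} reduces to \ref{i.ac} and \ref{i.ap}: on $\gromov^\infty$ and $\gromov^\infty(\Delta)$ the chaotic classes are exactly the intersection of the almost chaotic and the aperiodic classes, and a finite intersection of residual sets is residual.

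For \ref{i.ac} I would not try to describe the almost chaotic set directly, but rather sandwich it between a residual set and itself. The first ingredient is a combinatorial realization lemma: every finite rooted colored ball is, up to any tolerance $\epsilon$ and radius $r$, the central ball of some infinite \emph{quasi-transitive} colored graph of degree $\le\Delta$ (built by gluing copies of a fundamental block periodically, using finitely many colors $\epsilon$-close to the prescribed ones; here $\Delta\ge3$ provides the spare incidences needed to realize an arbitrary local pattern inside a periodic graph). This makes the quasi-transitive classes \emph{dense} in each ambient space. The second ingredient is the standard Baire argument that the set $T$ of classes with dense orbit, i.e.\ with $\overline{[X,\phi]}$ equal to the whole ambient space, is residual: fixing a countable basis $\{V_j\}$, each set $\{[X,x,\phi]\mid \im(\ell_{X,\phi})\cap V_j\neq\emptyset\}$ is open (membership is witnessed on a finite ball) and dense (given any target pattern at the root, attach a copy of a $V_j$-pattern elsewhere, using the degree slack when $\Delta\ge3$, together with an infinite tail, keeping the class in the prescribed neighborhood), so $T$ is a dense $G_\delta$. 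Finally $T$ is contained in the almost chaotic set: if $\overline{[X,\phi]}$ is everything then the dense quasi-transitive classes are in particular dense in $\overline{[X,\phi]}$. Hence the almost chaotic classes are residual.

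For \ref{i.ap} I would use the reformulation that $[X,\phi]$ is aperiodic if and only if $\ell_{X,\phi}$ is injective, equivalently no two distinct vertices have the same pointed class. Density of the aperiodic classes follows from the Cantor structure of $\Xi$: within the prescribed $\epsilon$-tolerance on $D_X(x,r)$, and with complete freedom beyond radius $r$, one can recolor so that $\phi$ becomes injective on vertices and uses, outside $D_X(x,r)$, colors from a clopen region disjoint from those used inside; an injective coloring admits only the identity automorphism. For the $G_\delta$ part I would write the complement as $\bigcup_{k\ge1}E_k$, where $E_k$ is the set of classes admitting a nontrivial color-automorphism that moves some vertex of $D_X(x,k)$; since every nontrivial automorphism moves some vertex, this union is exhaustive. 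Each $E_k$ is closed: along a convergent sequence the balls of any fixed radius stabilize and the graphs are locally finite, so a sequence of witnessing partial automorphisms (finitely many possibilities on each ball, by local finiteness near the root) has a coherent subsequential limit, producing a genuine nontrivial automorphism of the limit that still moves a vertex of $D_X(x,k)$. Combined with density this makes each $E_k$ nowhere dense, so the aperiodic set is residual.

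The \textbf{main obstacle} is the combinatorial realization step underlying \ref{i.ac}: embedding an arbitrary prescribed finite colored ball as the central ball of an infinite \emph{quasi-transitive} graph while respecting the degree bound $\Delta\ge3$ and using only finitely many ($\epsilon$-approximating) colors. Everything else — the Baire-category packaging, the $G_\delta$/$F_\sigma$ bookkeeping for aperiodicity, and the reduction of \ref{i.ch} to \ref{i.ac} and \ref{i.ap} — is then routine, although the compactness/diagonal argument for the closedness of the sets $E_k$, which relies on local finiteness of the graphs, is the one other place requiring genuine care.
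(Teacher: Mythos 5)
Your architecture for \ref{i.ac} and \ref{i.ch} is sound and close in spirit to the paper's, though packaged differently: the paper works with explicit open dense sets $\widehat V(n,r,m)$ tied to a periodic graph $K_{X,x,\phi,n}$ built by stringing copies of $D_X(x,n)$ along $\ZZ$, whereas you first show quasi-transitive classes are dense in the ambient space (your ``realization lemma'' is exactly this $K$-construction, and it does work for $\Delta\ge 3$) and then intersect with the residual set of classes whose leaf is dense. That route is valid and even yields the slightly stronger conclusion that a residual set of classes has $\overline{[X,\phi]}$ equal to the whole ambient space; the reduction of \ref{i.ch} to \ref{i.ac} plus \ref{i.ap} via ``aperiodic infinite implies non-quasi-transitive'' is the same as in the paper.

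The genuine gap is in \ref{i.ap}: your sets $E_k$ are \emph{not} closed, so the claimed $F_\sigma$ decomposition of the non-aperiodic classes fails. The problem is that a witnessing automorphism $f_i$ of $(X_i,\phi_i)$ may move $D(x_i,k)$ arbitrarily far from the basepoint, so there are not ``finitely many possibilities on each ball'' and no coherent subsequential limit need exist. Concretely, let $\phi\colon\ZZ\to\Xi$ be aperiodic and let $\phi_i$ be the $(2i+1)$-periodic coloring agreeing with $\phi$ on $[-i,i]$. Then $[\ZZ,0,\phi_i]\to[\ZZ,0,\phi]$, each $(\ZZ,\phi_i)$ admits the translation by $2i+1$ as a nontrivial color-automorphism moving $0\in D(0,k)$, so $[\ZZ,0,\phi_i]\in E_k$ for every $k$, yet the limit is aperiodic. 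The witnessing automorphisms escape to infinity, which is exactly what your compactness argument cannot control. This is repairable: either add a displacement bound to the definition, say $E_k=\{[X,x,\phi]\mid \exists f\in\Aut(X,\phi),\ d(x,f(x))\le k,\ f(v)\neq v \text{ for some } v\in D(x,k)\}$, which is still exhaustive and now genuinely closed since $d(x,f(x))\le k$ forces $f(D(x,R))\subset D(x,R+k)$ for all $R$ (one must also pass to a limit of the inverses to get surjectivity); or, as the paper does, sidestep the question of whether the aperiodic set is $G_\delta$ altogether by exhibiting a dense $G_\delta$ \emph{inside} it, namely $\bigcap_n W(n)$ where $W(n)$ is the open dense set of classes whose coloring is injective on $D(x,n)$ --- globally injective colorings admit no nontrivial automorphism. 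Your density argument for aperiodicity is exactly this recoloring idea, so the second fix is already half-present in your write-up.
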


All the subspaces appearing in Theorem~\ref{t.one} are Polish by \Cref{l.polish}, and in fact $\mathcal{G}_*^\infty(\Delta)$ and $\gromov^\infty(\Delta)$ are Cantor spaces. The reason for imposing $\Delta\geq 3$ is the following: For $\Delta=1$ we have $\gromov^\infty(1)=\emptyset$, so that the corresponding result is vacuously true. For $\Delta=2$, all connected infinite graphs are isomorphic to  either $\mathbb{N}$ or  $\ZZ$ with the obvious graph structures. Since the graph $\mathbb{N}$ is obviously aperiodic, the quasi-transitive classes cannot be dense in either $\gromov(2)$ nor $\mathcal{G}_*(2)$. Therefore, \Cref{i.ac} is false for $\Delta=2$. 

\begin{rem}
It is common to see in the literature a variation of $\gromov$ consisting of classes of pointed graphs endowed with both a vertex coloring and an edge coloring, see e.g.~\cite{AldousLyons}. The definitions of quasi-transitive, aperiodic, almost chaotic and chaotic classes can be obviously translated to this setting, and the same applies to the statement of \Cref{t.one}. The proof presented hereafter can be adapted without any difficulty. However, it adds significantly to the cumbersomeness of the notation, so the task is left to the interested reader.
\end{rem}

Finally, we present an example of a colored graph whose class is dense in $\gromov^{\infty}$. 

\begin{ex}\label{ex:dense}
Let $C\subset \Xi$ be a countable, dense subset.  Consider the set of  isomorphism classes of pointed, colored, finite and aperiodic graphs taking values in $C$, and let $P$ be a set of pointed, colored graphs containing exactly one representative of each such class. Since $P$ is countable, we can choose an enumeration of the form $P=\{ (Y_{i},y_{i},\phi_{i}) \}_{i \in \mathbb{Z}}$. Let $\psi$ be a aperiodic coloring of the integers taking values in $C$, and let $X$ be the colored graph constucted as follows: take the disjoint union of $(\ZZ,\psi)$ and the family $\{ (Y_{i},y_{i},\phi_{i}) \}_{i \in \mathbb{Z}}$, and add an edge joining $y_{i}$ to $i \in \mathbb{Z}$ for every $i\in \ZZ$.  

It is easy to see that $(X,\phi)$ is aperiodic: Note that $\mathbb{Z}$ is the unique bi-infinite simple path in $X$ and therefore it is preserved by any automorphism $f$ of $X$. Since $\phi|_{\mathbb{Z}}=\psi$ is aperiodic, $f$ fixes each vertex of $\ZZ$. So f preserves $(Y_i,y_{i})$ for each $i$. Since $(Y_i,y_{i},\phi_{i})$ is aperiodic, it follows that so is $(X,\phi)$. By construction, $(X,\phi)$ has any aperiodic colored finite graph as a subgraph, from which we conclude that $\overline{[X]}=\gromov^\infty$. Therefore, $(X,\phi)$ is chaotic if and only if the quasi-transitive classes are dense in $\gromov^\infty$. This can be proved directly by using a construction similar to that illustrated in Figure~\ref{f.ch}. Since the result also follows trivially from \Cref{i.ac}, we leave the details of the proof to the interested reader.
\end{ex}

\section{Preliminaries on graphs and colorings}\label{s: prelims on graphs & colorings}

\subsection{Graphs}\label{ss: graphs}

An ({\em undirected\/}) {\em graph\/} $X\equiv(X,E)$ consists of a set $X$ and a family $E$ of subsets $e\subset X$ with\footnote{The cardinality of a set $X$ is denoted by $|X|$.} $|e|=2$. The elements of $X$ and $E$ are called {\em vertices\/} and {\em edges\/}, respectively. We will identify a graph and its vertex set when no confusion may arise. If an edge $e$ contains a vertex $x$, it is said that $e$  and $x$ are {\em incident\/}. The {\em degree\/} $\deg x$ of a vertex $x$ is the number of edges incident to $x$. The \emph{degree} of $X$ is $\deg X=\sup_{x\in X}\deg x$. Two different vertices are {\em adjacent\/} if they define an edge. Two different edges are {\em consecutive\/} if they have a common vertex. For\footnote{We assume that $0\in\mathbb{N}$.} $n\in\mathbb{N}$, a {\em path\/} of {\em length\/} $n$ from $x$ to $y$ in $X$ is a sequence of $n$ consecutive edges joining $x$ to $y$; in terms of their vertices, it can be considered as a sequence $(z_0,\dots,z_n)$, where $z_0=x$, $z_n=y$, and $z_{i-1}$ and $z_i$ are adjacent vertices for all $i=1,\dots,n$. If any two vertices of $X$ can be joined by a path, then $X$ is {\em connected\/}. On any $Y \subset X$, we get the induced graph structure $E|_{Y}=\{\,\{x,y\}\in E\mid x,y\in Y\,\}$. Then $Y \equiv(Y,E|_{Y})$ is called a {\em subgraph\/} of $X$. 

Let $X'\equiv(X',E')$ be another graph. A bijection $X\to X'$ is an {\em isomorphism\/} ({\em of graphs\/}) if it induces a bijection $E\to E'$. Given distinguished points, $x_0\in X$ and $x'_0\in X'$, a ({\em pointed\/}) {\em isomorphism\/} $f:(X,x_0)\to(X',x'_0)$ is an isomorphism $f:X\to X'$ satisfying $f(x_0)=x'_0$. If there is an isomorphism $X\to X'$ (respectively, $(X,x_0)\to(X',x'_0)$), then these structures are called {\em isomorphic\/}, and the notation $X\cong X'$ (respectively, $(X,x_0)\cong(X',x'_0)$) may be used. The composition of isomorphisms is another isomorphism. An isomorphism $X\to X$ (respectively, $(X,x_0)\to(X,x_0)$) is called an {\em automorphism\/} of $X$ (respectively, $(X,x_0)$). The group of automorphisms of $X$ (respectively, $(X,x_0)$) is denoted by $\Aut(X)$ (respectively, $\Aut(X,x_0)$).

For the purposes of this paper, we will only consider connected graphs with finite vertex degrees. For these there is a canonical metric structure $d$, where $d(x,y)$ is the minimum length of paths in $X$ from $x$ to $y$.
For convenience, we will use \emph{disks} or \emph{closed balls} defined with non-strict inequalities, $D (x,r)=\{\,y\in X\mid d(x,y)\le r\,\}$. Also, let $S(x,r)=\{\,y\in X\mid d(x,y)= r\,\}$ denote the \emph{sphere} of center $x$ and radius $r$. We may add the ambient metric space as a subscript ``$D_X(x,r)$" to avoid ambiguity. The following  basic result will be used implicitly throughout the paper.

\begin{lem}
	If $X$ has finite vertex degrees, then its disks are finite\footnote{This means that $X$ is a \emph{proper} metric space, in the sense that its closed balls are compact.} and $X$ is countable.
\end{lem}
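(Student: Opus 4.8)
The plan is to prove both assertions by an induction on the radius $r$, building up a metric structure on $X$ using the local finiteness of vertex degrees. First I would observe that the two claims are closely linked: the disk $D(x,r)$ is the set of vertices reachable from $x$ by a path of length at most $r$, and since $X$ is connected, $X = \bigcup_{r \in \mathbb{N}} D(x,r)$ for any fixed basepoint $x$. Thus once I show each disk $D(x,r)$ is finite, countability of $X$ follows immediately as a countable union of finite sets.

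The core of the argument is therefore to bound the cardinality of $D(x,r)$. I would set $\Delta = \deg X < \infty$ (this is finite precisely because every vertex has finite degree and we may invoke the hypothesis; strictly, finite vertex degrees give $\deg x < \infty$ for each $x$, and the bound below works vertex-by-vertex, so I do not even need $\deg X$ to be finite as a supremum). The clean way is induction on $r$. For the base case $r = 0$ we have $D(x,0) = \{x\}$, which has one element. For the inductive step, every vertex $y \in D(x,r+1) \setminus D(x,r)$ lies in the sphere $S(x,r+1)$, and any such $y$ is adjacent to some vertex of $S(x,r) \subset D(x,r)$; this is because a shortest path from $x$ to $y$ has length $r+1$ and its penultimate vertex lies at distance exactly $r$. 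Hence
\[
|S(x,r+1)| \leq \sum_{z \in S(x,r)} \deg z,
\]
which is a finite sum of finite terms by the induction hypothesis that $S(x,r)$ (equivalently $D(x,r)$) is finite, together with finiteness of each degree. This gives $|D(x,r+1)| = |D(x,r)| + |S(x,r+1)| < \infty$, completing the induction.

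The step requiring the most care is justifying that each vertex at distance $r+1$ is adjacent to a vertex at distance exactly $r$, i.e.\ that distances along shortest paths decrease by exactly one at each step. This is the standard fact that the metric $d$ satisfies $d(x,y) = r+1$ implies there exists a neighbor $y'$ of $y$ with $d(x,y') = r$, which follows directly from the definition of $d$ as the minimal path length: a geodesic realizing $d(x,y)$ passes through such a penultimate vertex, and that vertex cannot be closer than $r$ to $x$ or the path would not be minimal. I expect no genuine obstacle here; the only subtlety worth stating explicitly is that local finiteness is used twice—once to ensure each summand $\deg z$ is finite, and implicitly to guarantee the spheres partition the disk into finitely many finite pieces. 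Once the disks are finite, the footnote's assertion that $X$ is a proper metric space (closed balls compact) is immediate since finite metric spaces are compact, and countability follows as noted above.
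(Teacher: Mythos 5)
Your proof is correct; the paper states this lemma without proof as a basic fact, and your induction on the radius (bounding $|S(x,r+1)|$ by $\sum_{z\in S(x,r)}\deg z$ via the penultimate vertex of a geodesic, then writing $X=\bigcup_{r\in\mathbb{N}}D(x,r)$ by connectedness) is exactly the standard argument the authors are implicitly relying on. The only remark worth adding is that, as you yourself note, local finiteness of each $\deg z$ suffices and no uniform bound $\deg X<\infty$ is needed.
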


\subsection{Colorings}\label{ss: colorings}

Let $\Xi$ be a Cantor space, i.e.\ a compact, metrizable, totally disconnected  topological space with no isolated points. Fix an ultrametric $d_\Xi$ inducing the topology of $\Xi$, which we will denote simply by $d$ when no confusion may arise.
Let $X$ be a  graph. A map  $\phi:X\to \Xi$ is called a ({\em vertex\/}) {\em coloring\/} of $X$, and $(X,\phi)$ is called a {\em colored graph\/}. If $x_0\in Y\subset X$, then the simplified notation $(Y,\phi)=(Y,\phi|_{Y})$ will be used. The following concepts for colored graphs are the obvious extensions of their graph versions: ({\em pointed\/}) {\em isomorphisms\/}, denoted by $f:(X,\phi)\to(X',\phi')$ and $f:(X,x_0,\phi)\to(X',x'_0,\phi')$, {\em isomorphic\/} (pointed) colored graphs, denoted by $(X,\phi)\cong(X',\phi')$ and $(X,x_0,\phi)\cong(X',x'_0,\phi')$, and {\em automorphism\/} groups of (pointed) colored graphs, denoted by $\Aut(X,\phi)$ and $\Aut(X,x_0,\phi)$.

For $R\in \mathbb{N}$, $\epsilon>0$, an \emph{$(R,\epsilon)$-equivalence} $h\colon (X,x,\phi)\rightarrowtail (Y,y,\psi)$ is a pointed graph isomorphism $h\colon (D(x,R),x)\to(D(y,R),y)$ such that $d(\phi(u),\psi(h(u)))<\epsilon$ for every $u\in  D(x,R)$. We use the modified arrow $\rightarrowtail$ to emphasize that $h$ is actually a partial map. It will be said that $(X,x,\phi),(Y,y,\psi)\in \gromov$ are $(R,\epsilon)$-equivalent if there is an $(R,\epsilon)$-equivalence $h\colon (X,x,\phi)\rightarrowtail (Y,y,\psi)$. The following lemma follows immediately from the ultrametric triangle inequality.

\begin{lem}\label{l.composition}
	If $f\colon (X_1,x_1,\phi_1)\rightarrowtail (X_2,x_2,\phi_2)$ and $g\colon (X_2,x_2,\phi_2) \rightarrowtail (X_3,x_3,\phi_3)$ are $(n,\epsilon)$ and $(m,\delta)$-equivalences, respectively, then $g\circ f \colon (X_1,x_1,\phi_1)\rightarrowtail (X_3,x_3,\phi_3)$ is a $(\min\{n,m\},\max\{\epsilon, \delta\})$-equivalencem, where $\dom(g\circ f)=D(x_1,\min\{n,m\})$.
\end{lem}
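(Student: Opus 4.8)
The plan is to set $k=\min\{n,m\}$ and verify the two defining clauses of a $(k,\max\{\epsilon,\delta\})$-equivalence separately: first that $g\circ f$ is a pointed graph isomorphism between the balls of radius $k$, and then that it distorts colors by less than $\max\{\epsilon,\delta\}$. The color bound is where the ultrametric hypothesis does the real work, but before invoking it I must make sure the composite $g\circ f$ is even defined on all of $D(x_1,k)$, so the first order of business is the domain bookkeeping.

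To handle that, I would record the elementary fact that a basepoint-preserving graph isomorphism of balls restricts to isomorphisms of all smaller concentric balls. Concretely, for a vertex $u$ with $d(x_1,u)=s\le n$ there is an ambient geodesic $x_1=z_0,\dots,z_s=u$, and every $z_i$ satisfies $d(x_1,z_i)=i\le n$, so the whole geodesic lies in $D(x_1,n)$ and hence is a path in the induced subgraph; this shows the subgraph metric inherited by $D(x_1,n)$ agrees with the ambient metric on distances to $x_1$. Since $f$ is a basepoint-preserving graph isomorphism it preserves this quantity, whence $f(D(x_1,r))=D(x_2,r)$ for every $r\le n$. Applying this with $r=k\le n$ gives $f(D(x_1,k))=D(x_2,k)$, and because $k\le m$ this image lies inside $\dom(g)=D(x_2,m)$. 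Thus $g\circ f$ is well defined on $D(x_1,k)$, the restrictions $f|_{D(x_1,k)}$ and $g|_{D(x_2,k)}$ are pointed isomorphisms onto $D(x_2,k)$ and $D(x_3,k)$, and their composite is a pointed graph isomorphism $(D(x_1,k),x_1)\to(D(x_3,k),x_3)$ with exactly the asserted domain.

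The final step bounds the color distortion. Fixing $u\in D(x_1,k)$ and writing $v=f(u)\in D(x_2,k)\subseteq D(x_2,m)$, I would note that $u\in D(x_1,n)$ forces $d(\phi_1(u),\phi_2(v))<\epsilon$, while $v\in D(x_2,m)$ forces $d(\phi_2(v),\phi_3(g(v)))<\delta$. The ultrametric triangle inequality then yields
\[
d\big(\phi_1(u),\phi_3(g(f(u)))\big)\le\max\big\{d(\phi_1(u),\phi_2(v)),\,d(\phi_2(v),\phi_3(g(v)))\big\}<\max\{\epsilon,\delta\},
\]
the last strict inequality holding because each term is bounded by one of $\epsilon$ or $\delta$, hence by $\max\{\epsilon,\delta\}$. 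This is precisely the color clause of a $(k,\max\{\epsilon,\delta\})$-equivalence, so the two clauses together finish the argument.

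I expect the only genuine subtlety to be the restriction step: one must resist taking ``$f$ preserves smaller balls'' for granted, since a priori $f$ is merely an isomorphism of the subgraphs $D(x_1,n)$ and $D(x_2,n)$, and it is the coincidence of the subgraph and ambient metrics on concentric balls of radius $\le n$ that validates $f(D(x_1,k))=D(x_2,k)$ and hence the containment in $\dom(g)$. Everything else reduces to the one-line ultrametric estimate already advertised in the statement.
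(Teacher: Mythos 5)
Your proof is correct and is exactly the argument the paper has in mind: the paper gives no written proof beyond the remark that the lemma ``follows immediately from the ultrametric triangle inequality,'' and your color estimate is precisely that one\-/line ultrametric step (note that the ordinary triangle inequality would only give $\epsilon+\delta$, so the ultrametric really is what makes the stated constant work). Your careful verification that $f$ carries $D(x_1,k)$ onto $D(x_2,k)$ --- via the agreement of the subgraph and ambient metrics on concentric balls --- is the domain bookkeeping the paper leaves implicit, and it is done correctly.
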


Let $\widehat\GG_*$ be the set\footnote{These graphs are countable, and therefore we can assume that their underlying sets are contained in $\mathbb{N}$. In this way, $\widehat\GG_*$ becomes a well defined set.} of isomorphism classes, $[X,x,\phi]$, of pointed connected colored graphs, $(X,x,\phi)$, whose vertices have finite degree. For each $R\in\mathbb{Z}^+$, let 
\[
\widehat{\mathcal{N}}_{R,1/R}=\{\,([X,x,\phi],[Y,y,\psi])\in\widehat\GG_*^2\mid [X,x,\phi] \text{ and }[Y,y,\psi] \text{ are $(R,1/R)$-equivalent }\,\}\;.
\]

\begin{rem}
Since the property of being $(R,\epsilon)$-equivalent is clearly invariant by colored graph isomorphisms, it makes sense to say whether two classes are $(R,\epsilon)$-equivalent or not. Thus the subsets $\widehat{\mathcal{N}}_{R,\epsilon}$ are well-defined. In what follows, we will incur in the same abuse of notation for  isomorphism-invariant properties without explicit mention.
\end{rem}

The sets $\widehat{\mathcal{N}}_{R,1/R}$ form a base of entourages of a uniformity on $\widehat\GG_*$, which is easily seen to be complete. Moreover this uniformity is metrizable because this base is countable. A metric inducing this uniformity is given by 
\[
d([X,x,\phi], [Y,y,\psi])= 2^{-n}\;,
\]
where $n$ is the largest integer such that $([X,x,\phi], [Y,y,\psi])\in \widehat{\mathcal{N}}_{n,1/n}$. \Cref{l.composition}  shows that $d$ is actually an ultrametric.

\begin{lem}\label{l.totdis}
    $\gromov$ and $\mathcal{G}_*$ are totally disconnected, perfect and separable metric spaces, hence Polish.
\end{lem}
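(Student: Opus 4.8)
My plan is to verify the four properties separately, using that completeness of $d$ has already been recorded (the uniformity with base $\{\widehat{\mathcal{N}}_{R,1/R}\}$ is complete, and $d$ is a complete ultrametric), so that ``Polish'' follows as soon as separability is established. \emph{Total disconnectedness} is then essentially free: since $d$ is an ultrametric, every ball $\{\,[Y,y,\psi] : d([X,x,\phi],[Y,y,\psi])\le 2^{-n}\,\}$ is clopen, so the space has a basis of clopen sets and no two distinct points can lie in a common connected subset. The genuinely substantive points are therefore \emph{separability} and \emph{perfectness}, the latter being the main obstacle.

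For separability I would exhibit an explicit countable dense set. Fix a countable dense $C\subset\Xi$ (available since $\Xi$ is a compact metric space). In $\mathcal{G}_*$ take all isomorphism classes of finite pointed graphs, and in $\gromov$ all isomorphism classes of finite pointed graphs colored by $C$; both collections are countable. Given $[X,x]$ (resp.\ $[X,x,\phi]$) and $N$, I would approximate it by the truncation to the finite ball $D(x,N)$, rounding in the colored case each $\phi(v)$ to a point of $C$ within $1/N$. The one thing to check is that the identity is an $(N,1/N)$-equivalence onto this truncation: a geodesic in $X$ from $x$ to $v\in D(x,N)$ stays inside $D(x,N)$, so distances to $x$ measured inside the ball agree with those in $X$, whence the induced $N$-ball of the truncation equals $D(x,N)$. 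This gives density.

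The heart of the matter is perfectness: for every class and every $N$ I must produce a \emph{distinct} class still $(N,1/N)$-equivalent to it, and the delicate step is guaranteeing that the modification genuinely changes the isomorphism type. When $X$ is infinite the sphere $S(x,N)$ is nonempty for all $N$, and I would fix $s\in S(x,N)$ and, for each $k\ge 1$, let $Y_k$ be the finite graph obtained from $B=D(x,N)$ by attaching a new vertex $t$ to $s$ and hanging $k$ leaves off $t$ (extending the coloring arbitrarily by values in $C$). Then $t$ is the unique vertex at distance $N+1$ from $x$ and has degree $k+1$, so the $Y_k$ are pairwise non-isomorphic as pointed graphs while $D_{Y_k}(x,N)=B$; hence all are $(N,1/N)$-equivalent to $[X,x]$ and at most one is isomorphic to it, yielding distinct classes arbitrarily close. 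In the colored space one has an alternative, more uniform perturbation: recolor a single vertex $v$ by a fresh value $c\in\Xi\setminus\phi(X)$ with $d(\phi(v),c)<1/N$, possible because $\Xi$ has no isolated points while $\phi(X)$ is countable; the class changes, since $c$ is attained in the new graph but not the old and a color-preserving isomorphism would have to respect the set of attained colors, while the two stay $(N,1/N)$-equivalent via the identity.

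The place I expect the real difficulty is the \emph{finite} uncolored case, where $S(x,N)=\emptyset$ once $N$ exceeds the eccentricity of $x$, so the gadget is unavailable and there is no coloring to perturb; in fact such a graph is genuinely isolated in $\mathcal{G}_*$. Thus the perfectness assertion is robust for $\gromov$ (the fresh-color trick handles finite graphs there) and for the infinite parts of both spaces, which is the regime relevant to \Cref{t.one}; I would phrase the statement accordingly. Modulo this point, assembling total disconnectedness, separability and perfectness with the already-established completeness gives that the spaces are Polish.
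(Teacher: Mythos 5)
Your argument is correct and follows the same basic strategy as the paper's proof (finite graphs with colors drawn from a countable dense subset of $\Xi$ for separability; local modifications for perfectness), but it is considerably more careful, and your caveat about the finite uncolored case is a genuine catch rather than an overcautious hedge. The paper proves perfectness with the one-line remark that one can ``modify $[X,x,\phi]$ in a region arbitrarily far from $x$,'' which tacitly assumes $X$ is infinite. As you observe, a finite pointed graph $[X,x]$ is an isolated point of $\mathcal{G}_*$: once $N$ exceeds the eccentricity of $x$, any $[Y,y]$ that is $(N,1/N)$-equivalent to $[X,x]$ must satisfy $Y=D_Y(y,N)\cong X$, since connectedness forces $Y$ to coincide with its $N$-ball. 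So $\mathcal{G}_*$ as defined (finite graphs included) is not perfect, and the statement needs the qualification you propose; in $\gromov$ your fresh-color perturbation rescues the finite case, so perfectness there is fine. The discrepancy is harmless downstream, since \Cref{l.polish} and \Cref{t.one} only concern the subspaces of infinite graphs, where both your sphere gadget and the paper's ``modify far away'' argument apply. A second small improvement on your side: the paper's countable dense set is described as the classes with $X$ and $\im\phi$ finite, which by itself still allows uncountably many classes because $\Xi$ is uncountable; your restriction to colorings valued in a fixed countable dense $C\subset\Xi$ is the correct formulation.
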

\begin{proof}
    By the preceeding discussion, we only have to prove separability and perfection. The former follows from the fact that elements $[X,x,\phi]$ with $X$  and $\im \phi $ finite form a countable dense subset. To prove perfection note that we can modify $[X,x,\phi]$ in a region arbitrarily far from $x$, thus obtaining a non-constant sequence converging to $[X,x,\phi]$. The proof for $\mathcal{G}_*$ proceeds similarly.
\end{proof}

By unraveling the definition of the topology of $\gromov$, we obtain the following characterization for almost chaotic colored pointed graphs. It is then clear that, in spite of its formulation, it does not depend on the choice of basepoint.
\begin{lem}\label{l:chaotic}
	Let $(X,x,\phi)$ be a pointed colored graph. Then $(X,\phi)$ is almost chaotic if and only if there is a sequence of  quasi-transitive pointed colored graphs $(Y_n,y_n,\psi_n)$  satisfying:
	\begin{lemenum}
		\item $(X,x,\phi)$ is $(n,1/n)$-equivalent to $( Y _n,y_n, \psi_n)$, and
		\item for all $ n,m\in \mathbb{N}$, there is some  $x_{n,m}\in X$ such that $(X,x_{n,m},\phi)$ is $(m,1/m)$-equivalent to $( Y _n, y_n, \psi_n)$.
	\end{lemenum}
\end{lem}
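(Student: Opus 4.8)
The plan is to translate the notion of almost chaos, which is phrased as density of quasi-transitive classes in $\overline{[X,\phi]}$, into a statement about the single base point $x$, exploiting that $[X,\phi]=\{[X,z,\phi]\mid z\in X\}$ is by construction dense in $\overline{[X,\phi]}$. Writing $Q$ for the set of quasi-transitive classes contained in $\overline{[X,\phi]}$, I first record the elementary reduction that $Q$ is dense in $\overline{[X,\phi]}$ if and only if $[X,z,\phi]\in\overline{Q}$ for every $z\in X$. The forward implication is immediate since $\overline{Q}\supseteq\overline{[X,\phi]}\ni[X,z,\phi]$; conversely, if every $[X,z,\phi]$ lies in the closed set $\overline{Q}$ then $\overline{[X,\phi]}=\overline{\{[X,z,\phi]\}}\subseteq\overline{Q}$, and $Q\subseteq\overline{[X,\phi]}$ gives the reverse inclusion, so $Q$ is dense. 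Thus $(X,\phi)$ is almost chaotic exactly when every $[X,z,\phi]$ is a limit of quasi-transitive classes contained in $\overline{[X,\phi]}$. Throughout I will use that, by the description of the metric through the entourages $\widehat{\mathcal{N}}_{n,1/n}$, convergence $[A_k]\to[A]$ means that for each $n$ the classes are $(n,1/n)$-equivalent for all large $k$.

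With this reduction the forward implication is essentially free. If $(X,\phi)$ is almost chaotic, then applying the reduction at $z=x$ shows that the quasi-transitive classes in $\overline{[X,\phi]}$ accumulate at $[X,x,\phi]$. Hence for each $n$ we may choose such a class $[Y_n,y_n,\psi_n]$ that is $(n,1/n)$-equivalent to $[X,x,\phi]$, which is exactly condition (a). Since moreover $[Y_n,y_n,\psi_n]\in\overline{[X,\phi]}=\overline{\{[X,z,\phi]\}}$, for each $m$ there is some $x_{n,m}\in X$ with $[X,x_{n,m},\phi]$ being $(m,1/m)$-equivalent to $[Y_n,y_n,\psi_n]$, which is condition (b).

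For the converse I would first isolate the key technical tool, a restriction (or ``shadowing'') property of equivalences: if $h\colon(X,x,\phi)\rightarrowtail(Y,y,\psi)$ is an $(R,\epsilon)$-equivalence and $u\in D(x,s)$ with $s\le R$, then $h$ restricts to an $(R-s,\epsilon)$-equivalence $(X,u,\phi)\rightarrowtail(Y,h(u),\psi)$. This holds because a pointed graph isomorphism on $D(x,R)$ is a local isometry: any geodesic of length at most $R-s$ issuing from $u$ stays inside $D(x,R)$, so $h$ carries $D(u,R-s)$ isomorphically onto $D(h(u),R-s)$, and the colors still agree within $\epsilon$. Using this I would then show that $\overline{[X,\phi]}$ is \emph{saturated}, i.e.\ if $[Y,y,\psi]\in\overline{[X,\phi]}$ and $y'\in Y$ then $[Y,y',\psi]\in\overline{[X,\phi]}$: approximating $[Y,y,\psi]$ by classes $[X,z_m,\phi]$ that are $(m,1/m)$-equivalent to it, transport $y'$ through each equivalence to $z'_m\in X$ and apply the shadowing tool to conclude $[X,z'_m,\phi]\to[Y,y',\psi]$.

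Finally I would assemble these pieces to establish the converse. Given (a), (b) and a target $z\in X$, observe that (b) says precisely that each $[Y_n,y_n,\psi_n]$ lies in $\overline{[X,\phi]}$; by saturation the whole leaf $[Y_n,\psi_n]$, and hence every class $[Y_n,w,\psi_n]$, lies in $\overline{[X,\phi]}$, each of them quasi-transitive because quasi-transitivity is a property of the unpointed class $[Y_n,\psi_n]$ alone. For $n>d(x,z)$, transport $z$ through the $(n,1/n)$-equivalence of (a) to a point $w_n\in Y_n$; the shadowing tool shows that $[X,z,\phi]$ is $\big(n-d(x,z),1/(n-d(x,z))\big)$-equivalent to the quasi-transitive class $[Y_n,w_n,\psi_n]\in\overline{[X,\phi]}$, so $[Y_n,w_n,\psi_n]\to[X,z,\phi]$. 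By the reduction this proves that $(X,\phi)$ is almost chaotic. I expect the main obstacle to be exactly this converse step: one must produce quasi-transitive classes approximating an \emph{arbitrary} $[X,z,\phi]$ while guaranteeing that they stay inside $\overline{[X,\phi]}$, and it is the interplay of the shadowing property (for the approximation) with the saturation of $\overline{[X,\phi]}$ (for the membership) that makes the argument go through.
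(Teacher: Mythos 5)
Your argument is correct; the paper offers no explicit proof of this lemma (it is presented as following ``by unraveling the definition of the topology of $\gromov$''), and your unraveling --- via the restriction/shadowing property of $(R,\epsilon)$-equivalences and the saturation of $\overline{[X,\phi]}$ under change of basepoint --- is exactly the intended argument. In particular, your converse correctly supplies the two points the paper leaves implicit: that condition (b) forces $[Y_n,y_n,\psi_n]\in\overline{[X,\phi]}$, and that transporting the basepoint through an equivalence only costs a controlled loss of radius, so the quasi-transitive classes accumulate at every $[X,z,\phi]$ and hence are dense in $\overline{[X,\phi]}$.
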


 Removing the colorings from the notation, we get the Polish space $\GG_*$ of isomorphism classes of pointed connected graphs.
 In this way, we get canonical maps $\ell_{X}:X\to\GG_*$ for connected graphs $X$, defining a canonical partition of $\GG_*$ by the sets $[X]:=\im (\ell_{X})$. Then it is said that $X$ is {\em aperiodic\/} if $\ell_{X}$ is injective,  \emph{quasi-transitive} if $[X]$ is a finite set, \emph{almost chaotic} if the quasi-transitive classes are a dense subset of $\overline{[X]}$, and \emph{chaotic} if $X$ is infinite, aperiodic and almost chaotic. Observe also that the forgetful map $\hat p\colon \widehat\GG_*\to\GG_*$ is continuous. The space $\GG_*$ is a subspace of the Gromov space $\mathcal{M}_*$ of isomorphism classes of pointed proper metric spaces \cite{Gromov1981}, \cite[Chapter~3]{Gromov1999}. The obvious version of Lemma~\ref{l:chaotic}  in this setting follows by considering a constant coloring.

\begin{lem}\label{l.polish}
	The topological spaces $\mathcal{G}_*^\infty$, $\mathcal{G}_*^\infty(\Delta)$,  $\gromov^\infty$ and $\gromov^\infty(\Delta)$ are totally disconnected, separable, perfect and completely metrizable for every choice of graph $X$. Moreover $\mathcal{G}_*^\infty(\Delta)$ and $\gromov^\infty(\Delta)$ are compact and therefore Cantor spaces.
\end{lem}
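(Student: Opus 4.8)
The plan is to derive all four assertions from \Cref{l.totdis} by showing that each of the four subspaces is closed in its ambient space ($\gromov$ or $\mathcal{G}_*$), and that bounding the degree forces total boundedness. Total disconnectedness and separability are inherited by every subspace of a totally disconnected separable metric space, so these follow at once from \Cref{l.totdis}. For complete metrizability I would use that a closed subspace of a complete metric space is complete, together with the fact (from the discussion preceding \Cref{l.totdis}) that $\gromov$ and $\mathcal{G}_*$ are complete.

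The crux, and the step I expect to be the main obstacle, is that the global condition ``$X$ is infinite'' cuts out a \emph{closed} set, i.e.\ that a limit of infinite graphs is infinite. I would prove the equivalent statement that the finite classes are open. The key point is that an $(R,\epsilon)$-equivalence $h\colon(X,x,\phi)\rightarrowtail(Y,y,\psi)$ preserves distance to the basepoint: a geodesic from $x$ to any $u\in D(x,R)$ stays inside $D(x,R)$, so distance to $x$ computed in the induced subgraph $D(x,R)$ agrees with the ambient distance (and similarly for $Y$); since $h$ is a rooted isomorphism of these disks, it maps $S(x,r)$ onto $S(y,r)$ for every $r\le R$. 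Now suppose $X$ is finite and set $R_0=\max_{u\in X}d(x,u)$, so that $S(x,r)=\emptyset$ for all $r>R_0$. Any infinite connected locally finite graph $Y$ satisfies $S(y,r)\neq\emptyset$ for every $r$, because $Y=\bigcup_r D(y,r)$ with each $D(y,r)$ finite. Hence no class that is $(R_0+1,\epsilon)$-equivalent to $[X,x,\phi]$ can be infinite, and in fact such a class has underlying graph $D(y,R_0+1)=Y\cong X$. This exhibits a neighborhood of $[X,x,\phi]$ consisting of finite classes, so $\gromov^\infty$ (and, by the identical argument, $\mathcal{G}_*^\infty$) is closed.

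Next I would check that $\gromov(\Delta)$ is closed, which is easier since the degree bound is local: for any $[Y_n]\to[X]$ with $\deg Y_n\le\Delta$ and any vertex $v\in X$, choosing $R>d(x,v)+1$ makes $v$ and all its neighbors lie in $D(x,R)\cong D(y_n,R)$ for large $n$, whence $\deg_X v\le\Delta$. Thus $\gromov^\infty(\Delta)=\gromov^\infty\cap\gromov(\Delta)$ and $\mathcal{G}_*^\infty(\Delta)$ are closed, hence complete. For compactness I would show $\gromov(\Delta)$ is totally bounded: for each $R$ there are finitely many rooted isomorphism types of disks of radius $R$ and degree $\le\Delta$ (the number of vertices is at most $1+\Delta\sum_{k<R}(\Delta-1)^k$), and, $\Xi$ being compact, finitely many sets of diameter $<1/R$ cover it; consequently there are only finitely many $(R,1/R)$-equivalence classes, i.e.\ finitely many balls of radius $2^{-R}$ cover $\gromov(\Delta)$. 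A complete totally bounded metric space is compact, so $\gromov(\Delta)$ is compact and its closed subset $\gromov^\infty(\Delta)$ is compact; the same holds for $\mathcal{G}_*^\infty(\Delta)$.

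It remains to establish perfection, after which Brouwer's characterization of the Cantor set (a nonempty compact, perfect, totally disconnected metric space) finishes the compact cases. Perfection follows as in \Cref{l.totdis} by modifying a class in a region far from the basepoint while respecting the constraints: given $[X,x,\phi]$ with $X$ infinite and $n\in\mathbb{N}$, pick a vertex $z$ with $d(x,z)>n$ and recolor it by some $c\in\Xi\setminus\phi(X)$ (possible as $\phi(X)$ is countable while $\Xi$ is uncountable). The new class agrees with $[X,x,\phi]$ on $D(x,n)$, hence lies within $2^{-n}$ of it, but differs from it since it uses a color outside $\phi(X)$; the underlying graph is unchanged, so it stays infinite and of degree $\le\Delta$. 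For the uncolored spaces one instead subdivides an edge at distance $>n$ from $x$. Hence none of the four spaces has isolated points, and $\mathcal{G}_*^\infty(\Delta)$, $\gromov^\infty(\Delta)$ (nonempty for $\Delta\ge2$) are Cantor spaces.
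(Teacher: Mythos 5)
Your overall architecture coincides with the paper's: total disconnectedness and separability are hereditary, the four subspaces are closed in $\gromov$ or $\mathcal{G}_*$ (hence Polish by \Cref{l.totdis} and completeness of the ambient ultrametric), and the degree bound together with compactness of $\Xi$ yields compactness of $\gromov^\infty(\Delta)$ and $\mathcal{G}_*^\infty(\Delta)$. You also supply two details that the paper leaves implicit and that are worth having: a proof that the infinite classes form a closed set (the sphere-counting argument is correct, since an $(R,\epsilon)$-equivalence matches up spheres about the basepoints because geodesics to the root stay inside the disk), and the replacement of the paper's diagonal-subsequence argument by total boundedness (finitely many rooted disk types of radius $R$ and degree $\le\Delta$, finitely many $1/R$-cells of $\Xi$, hence finitely many $(R,1/R)$-classes); these are equivalent routes to compactness.

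The one step that does not go through is perfection of the uncolored spaces by subdividing a far-away edge. Subdividing an edge at distance $>n$ from the root need not change the pointed isomorphism class: for $X=\mathbb{N}$ rooted at $0$, for $X=\mathbb{Z}$, and more generally for any graph with a ray as a tail, the subdivided graph is isomorphic to the original as a pointed graph, so you have produced no new point of the space. A different local modification is needed (e.g.\ attaching a pendant vertex where the degree bound permits, or deleting a far-away edge and retaining the root component), and some genuine care is required here: $[\mathbb{N},0]$ is in fact an isolated point of $\mathcal{G}_*^\infty(2)$, since any infinite connected graph of degree $\le 2$ whose $1$-disk about the root is a single rooted edge must be $(\mathbb{N},0)$; so perfection of $\mathcal{G}_*^\infty(\Delta)$ fails for $\Delta=2$ and cannot be established in the stated generality. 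To be fair, the paper's own proof is silent on perfection altogether, so on this point you attempted more than the authors did, and your colored-case argument (recoloring one far vertex with a color outside the countable set $\phi(X)$) is correct as written.
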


\begin{proof}
    All the spaces are totally disconnected because $\gromov$ and $\mathcal{G}_*$ are and this is an hereditary property. 
	$\mathcal{G}_*^\infty$, $\mathcal{G}_*^\infty(\Delta)$,  $\gromov^\infty$ and $\gromov^\infty(\Delta)$ are closed subsets of $\mathcal{G}_*$ and $\gromov$, respectively, hence  Polish by \Cref{l.totdis} and~\cite[Prop.~8.1.2]{Cohn}. 
	To prove the compactness of  $\mathcal{G}_*^\infty(\Delta)$ and $\gromov^\infty(\Delta)$ note that, for every $n\in \mathbb{N}$, the disks $(D_X(x,n),x)$ with $[X,x]\in \mathcal{G}_*$  represent only finitely many isomorphism classes. Therefore,  for every sequence $[X_n,x_n]$ in $\mathcal{G}_*^\infty(\Delta)$, we can find a convergent subsequence by a diagonal process. Finally, the compactness of $\Xi$ ensures that, given colorings $\phi_n$, we can find a subsequence $n(k)$ such that $[X_{n(k)},x_{n(k)},\phi_{n(k)}]$ converges in $\gromov^\infty(\Delta)$.
\end{proof}

\section{Proof of Theorem~\ref{t.one}}

Let us start by proving \Cref{i.ac}. Let $\Delta\geq 3$ and fix an arbitrary $\xi\in \Xi$.
Given a pointed colored graph $(X,x,\phi)$ with $X$ infinite, and $n\in \mathbb{N}$, define the pointed colored graph 
\[
(K,k,\psi):=(K,k,\psi)_{X,x,\phi,n}
\]
as follows: Since $X$ is infinite, we can choose $y\in S(x,n)$ such that $\deg_{D(x,n)}y<\deg X$. Let $(\ZZ,\chi)$ be the Cayley graph of the integers with $\chi$ the constant coloring with value $\xi$. Consider a $\ZZ$-indexed family $(D(x_z,n),x_z,\phi_z)_{z\in \ZZ}$ of copies of the pointed colored disk $(D_X(x,n),x,\phi)$, and let $y_z$ denote the corresponding copies of $y$.  Let  $(K,\psi)$ be the  disjoint union of $(\ZZ,\chi)$ and the family $(D(x_z,n),x_z,\phi_z)_{z\in\ZZ}$ with  additional edges joining $y(z)$ to $z$ for each $z\in \ZZ$. Let $k=x_0\subset (D(x_0,n),x_0)$. 

The previous construction depends on the choice of $y\in S(x,n)$. Let us assume that we have a fixed choice of such $y$ for every class   $[X,x,\phi]\in \gromov^\infty$ and $n\in \mathbb{N}$, so that we have a well-defined map $([X,x,\phi],n)\mapsto [K,k,\chi]_{X,x,\phi,n}$. The definition makes the following lemma obvious.

\begin{lem}\label{l.equi}
If $[X,x,\phi]$ is $(m,1/m)$-equivalent to $[Y,y,\psi]$, then $[K,k,\chi]_{X,x,\phi,m}$ is $(m,1/m)$-equivalent to $[K,k,\chi]_{Y,y,\psi,m}$.
\end{lem}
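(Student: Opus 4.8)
The plan is to observe that, within radius $m$ of its basepoint, the graph produced by the construction is indistinguishable from a single copy of the disk $D(x,m)$, so that the hypothesised $(m,1/m)$-equivalence of the input data transfers verbatim. Throughout I set $n=m$ in the construction. To avoid a clash with the basepoint $y$ of $Y$, I write $v\in S_X(x,m)$ and $w\in S_Y(y,m)$ for the sphere vertices chosen in forming the two graphs $K:=[K,k,\chi]_{X,x,\phi,m}$ and $K':=[K,k,\chi]_{Y,y,\psi,m}$ of the statement, with basepoints denoted $k$ and $k'$, and $v_0$, $w_0$ for their copies in the $0$-th disks, whose centres are precisely $k=x_0$ and $k'=y_0$.

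First I would identify the ball $D_K(k,m)$. The only edge leaving the $0$-th disk is the one joining $v_0$ to the spine vertex $0\in\ZZ$. Because $v\in S_X(x,m)$ lies at distance exactly $m$ from $x$, one checks that $d_K(k,v_0)=m$, hence $d_K(k,0)=m+1$, and that every spine vertex and every vertex of a disk other than the $0$-th one is at distance $>m$ from $k$; conversely each vertex $u_0$ of the $0$-th disk satisfies $d_K(k,u_0)=d_X(x,u)\le m$. Thus $D_K(k,m)$ is exactly the $0$-th disk, and as a pointed colored graph it is canonically isomorphic to $(D_X(x,m),x,\phi)$ (the edge from $v_0$ to $0$ is absent from the induced subgraph, since $0\notin D_K(k,m)$). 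The identical argument gives $D_{K'}(k',m)\cong(D_Y(y,m),y,\psi)$.

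Then I would transfer the equivalence. By hypothesis there is an $(m,1/m)$-equivalence $h\colon(D_X(x,m),x)\to(D_Y(y,m),y)$, that is, a pointed graph isomorphism with $d(\phi(u),\psi(h(u)))<1/m$ for all $u$. Pre- and post-composing $h$ with the two canonical isomorphisms of the previous step produces a pointed graph isomorphism $D_K(k,m)\to D_{K'}(k',m)$ whose colour discrepancy is still $<1/m$; this is precisely an $(m,1/m)$-equivalence $K\rightarrowtail K'$, as required.

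The only step demanding genuine care is the distance computation: I must rule out shortcuts through the spine, so that $D_K(k,m)$ is neither larger nor smaller than the full $0$-th disk. This is exactly where the choice of $v$ on the sphere $S_X(x,m)$ — rather than at an interior vertex — is used, since it places the unique exit to the spine on the very boundary of the radius-$m$ ball and thereby pushes the remainder of $K$ to distance $m+1$. (The degree condition $\deg_{D(x,m)}v<\deg X$ from the construction plays no role here; it is needed elsewhere, to control $\deg K$.) Everything else is a formality, which is why the statement can be called obvious.
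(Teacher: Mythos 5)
Your argument is correct and is precisely the verification the paper leaves implicit: the paper offers no proof at all, asserting only that ``the definition makes the following lemma obvious.'' Your key observation --- that $D_K(k,m)$ coincides with the $0$-th copy of $D_X(x,m)$ because the unique exit to the spine is attached at a vertex of $S_X(x,m)$, pushing the spine and all other disks to distance at least $m+1$ --- is exactly the reason the hypothesised $(m,1/m)$-equivalence transfers verbatim, and your aside that the degree condition on the chosen sphere vertex is irrelevant here is also accurate.
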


\begin{figure}[tb]\label{f.ch}
	\includegraphics{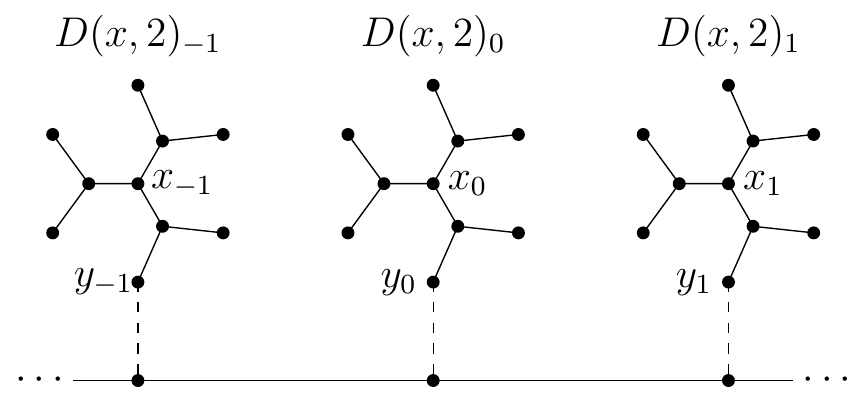}
	\caption{Construction of $K$ from $X$ and $\ZZ$}
\end{figure}

\begin{lem}\label{l.kqt}
For every pointed colored graph $(X,x,\phi)$ with $X$ infinite and $n\in \mathbb{N}$, the colored graph $(K,\psi)_{X,x,\chi,n}$ is quasi-transitive.
\end{lem}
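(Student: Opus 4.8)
The plan is to show that the image $\im(\ell_{K,\psi})$ is finite by analyzing the isomorphism classes $[K,v,\psi]$ as $v$ ranges over the vertices of $K$. Recall that $K$ is built from a disjoint union of a $\ZZ$-indexed family of identical pointed colored disks $(D(x_z,n),x_z,\phi_z)$, each a copy of $(D_X(x,n),x,\phi)$, glued along a central bi-infinite line $(\ZZ,\chi)$ by edges joining the copy $y_z$ of the chosen vertex $y$ to the integer $z$. The crucial structural feature is that $\psi$ is constant (equal to $\xi$) along the spine $\ZZ$, since we are forming $(K,\psi)_{X,x,\chi,n}$, i.e.\ the coloring on the disks themselves has also been replaced by the constant coloring $\chi$. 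Consequently $K$ carries a $\ZZ$-action by translation $z \mapsto z+1$ that shifts each disk $D(x_z,n)$ to $D(x_{z+1},n)$ and shifts the spine, and this action is by color-preserving automorphisms of $(K,\psi)$.

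First I would make precise this translation automorphism $T\in \Aut(K,\psi)$ and observe that it acts on the vertex set of $K$ with finitely many orbits: every vertex lies either on the spine $\ZZ$ or in exactly one disk $D(x_z,n)$, and $T$ identifies the spine vertex $z$ with $z+1$ and the disk vertex $(w,z)$ with $(w,z+1)$. Hence the orbit of a vertex is determined by its ``type'': either it is a spine vertex (one orbit), or it is the copy in disk $z$ of a fixed vertex $w\in D(x,n)$ (one orbit for each $w\in D(x,n)$). Since $D(x,n)$ is a finite set by the properness of $X$, there are only finitely many $T$-orbits on $K$.

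Next I would use the fact that two vertices in the same $\Aut(K,\psi)$-orbit give the same class in $\gromov$: if $v' = gv$ for some $g\in\Aut(K,\psi)$, then $g$ restricts to a pointed colored isomorphism witnessing $[K,v,\psi]=[K,v',\psi]$, so $\ell_{K,\psi}(v)=\ell_{K,\psi}(v')$. Therefore $\im(\ell_{K,\psi})$ has cardinality at most the number of $\Aut(K,\psi)$-orbits, which is at most the number of $T$-orbits (since $\langle T\rangle \leq \Aut(K,\psi)$), and we have just bounded the latter by $|D(x,n)|+1 < \infty$. This shows $[K,\psi]=\im(\ell_{K,\psi})$ is finite, which is exactly the definition of $(K,\psi)$ being quasi-transitive.

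I expect the main obstacle to be the careful verification that $T$ really is a well-defined color-preserving graph automorphism of the full glued graph $K$, rather than merely a self-map of the disjoint pieces. One must check that $T$ sends edges to edges at the three kinds of locations: edges internal to a disk (mapped to the corresponding edges of the next disk, which works because the disks are literal copies), edges of the spine $\ZZ$ (mapped by the shift, which preserves adjacency of consecutive integers), and the connecting edges $\{y_z,z\}$ (mapped to $\{y_{z+1},z+1\}$, again an edge of $K$ by construction). Color preservation is immediate on the spine and on the disks precisely because we use the \emph{constant} coloring $\chi$ throughout in the statement $(K,\psi)_{X,x,\chi,n}$, so no subtle matching of colors between shifted disks is required. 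Once this automorphism is in hand, the finiteness of the orbit count, and hence of the image, is routine.
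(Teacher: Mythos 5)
Your proof is correct and follows essentially the same route as the paper, which simply exhibits the translation automorphisms sending each $v\in D(x,n)_z$ to the corresponding point of $D(x,n)_{z+m}$ and sending $z$ to $z+m$ on the spine, these acting with finitely many orbits. One minor point: the subscript $\chi$ in the statement is a typo for $\phi$ (compare the construction and its use in Proposition~\ref{p:v}), so the disks carry copies of $\phi|_{D(x,n)}$ rather than the constant coloring; this does not affect your argument, since color preservation under the shift already follows from the disks being identical colored copies, as you yourself note.
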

\begin{proof}
For each $m\in \mathbb{N}$, there is an obvious colored graph isomorphism sending each $v\in D(x,n)_z$, $z\in \ZZ$ to the corresponding point in $D(x,n)_{z+m}$, and sending  $z\in \ZZ$ to $z+m\in \ZZ$.
\end{proof}

Let $\widehat{V}(n,r,m)$ be the subset consisting on classes $[X,x,\phi]\in \gromov^\infty$ satisfying that there is some $y\in X$  such that  $d(x,y)\geq r$ and $[X,y,\phi]$ is $(m,1/m)$-equivalent to $[K_{X,x,\phi,n}, k_{X,x,\phi,n}, \psi_{X,x,\phi,n}]$.

\begin{prop}\label{p:v}
	For $n,r,m\in\mathbb{N}$, the sets $\widehat V(n,r,m)$ and $ V(n,r,m)$ are open, dense subsets of $\gromov^\infty$ and $\mathcal{G}_*^\infty$, respectively. For $\Delta\geq 3$, $\widehat V(n,r,m)\cap \gromov^\infty(\Delta)$ and $ V(n,r,m)\cap \mathcal{G}^\infty_*(\Delta)$ are open, dense subsets of $\gromov^\infty(\Delta)$ and  $\mathcal{G}_*^\infty(\Delta)$, respectively.
\end{prop}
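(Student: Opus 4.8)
The plan is to prove openness and density separately, treating the colored case $\widehat V(n,r,m)\subset\gromov^\infty$ as the main case; the uncolored version $V(n,r,m)$ and the bounded-degree versions follow by the same argument (using a constant coloring for the former, and checking that all constructions stay within degree $\Delta$ for the latter).

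\medskip

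\noindent\textbf{Openness.} First I would show that membership in $\widehat V(n,r,m)$ is detected by a finite radius. The defining condition asks for a vertex $y$ with $d(x,y)\geq r$ such that $(X,y,\phi)$ is $(m,1/m)$-equivalent to the graph $(K,k,\psi)_{X,x,\phi,n}$. The witnessing data—the point $y$ and the $(m,1/m)$-equivalence—only involve the balls $D(x,r)$ (to locate $y$ and measure $d(x,y)$, noting $y$ can be taken in $D(x,r+m)$ by connectedness) and $D(y,m)$, together with the ball of radius $n$ around $x$ needed to build $K$. Hence the whole condition is determined by $D(x,R)$ for $R=r+m+n$ (or some explicit bound of this form). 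Therefore, if $[X',x',\phi']$ is $(R,1/R)$-close to $[X,x,\phi]$ with $R$ large enough, the isomorphism on $D(x,R)$ transports the witness $y$ to a witness $y'$ for $X'$, and by \Cref{l.equi} the associated graphs $K_{X',x',\phi',n}$ and $K_{X,x,\phi,n}$ are $(m,1/m)$-equivalent; composing equivalences via \Cref{l.composition} shows $[X',x',\phi']\in\widehat V(n,r,m)$. Thus $\widehat V(n,r,m)$ is open, and intersecting with the (closed) subspace $\gromov^\infty(\Delta)$ keeps it open in the subspace topology.

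\medskip

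\noindent\textbf{Density.} This is the heart of the argument. Given any $[X,x,\phi]\in\gromov^\infty$ and any target precision, it suffices to produce a class in $\widehat V(n,r,m)$ that is $(N,1/N)$-equivalent to $[X,x,\phi]$ for arbitrarily large $N$. The idea is to leave the ball $D(x,N)$ untouched and surgically modify $X$ far from $x$ so that somewhere at distance $\geq r$ a copy of the graph $K_{X,x,\phi,n}$ appears around a point that is $(m,1/m)$-equivalent to its basepoint. Concretely: choose a vertex $w$ at distance greater than $N+r$ from $x$ of degree less than $\Delta$ (possible since $X$ is infinite and $\deg X\le\Delta$, so there are vertices with room to attach an edge—this is exactly where $\Delta\geq 3$ is used, to guarantee available valence for grafting the bi-infinite line and the disks of $K$), and graft onto $X$ at $w$ a copy of the building block $K_{X,x,\phi,n}$ (or rather enough of it, out to radius $m$ around a chosen basepoint $k$), placing that basepoint at distance $\geq r$ from $x$. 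The resulting colored graph $X'$ agrees with $X$ on $D(x,N)$, hence is $(N,1/N)$-equivalent to $[X,x,\phi]$, while the grafted basepoint serves as the required $y$, giving $[X']\in\widehat V(n,r,m)$. Letting $N\to\infty$ yields density.

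\medskip

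\noindent\textbf{Main obstacle.} The delicate point is the surgery in the density step: one must attach the block $K_{X,x,\phi,n}$ to $X$ far from $x$ without altering $D(x,N)$, while respecting the degree bound $\Delta$ and keeping the result connected with finite vertex degrees. The degree constraint is what forces $\Delta\geq 3$—attaching a bi-infinite line (degree $2$) plus a grafting edge already requires valence $3$ at the attachment vertices—so I would need to verify carefully that vertices of the requisite free valence exist arbitrarily far out, and that the glued copy of $K$ can be realized so that the intended basepoint genuinely has an $(m,1/m)$-equivalence to $k_{X,x,\phi,n}$ rather than merely an approximate local match. A secondary bookkeeping issue is confirming that in the bounded-degree spaces $\gromov^\infty(\Delta)$ the modified class still lies in $\gromov^\infty(\Delta)$, i.e.\ that the construction never exceeds degree $\Delta$ anywhere; this should follow from choosing the attachment vertex $w$ with $\deg_X w\le\Delta-1$ and building $K$ itself within degree $\Delta$, as is implicit in its definition.
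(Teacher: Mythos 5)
Your openness argument is essentially the paper's: the witness $y$ and the $(m,1/m)$-equivalence to $K_{X,x,\phi,n}$ are visible inside the ball of radius $n+m+d(x,y)$ around $x$, so any class that is $(R,1/R)$-close for $R$ at least this (point-dependent) quantity inherits a transported witness via \Cref{l.composition} and \Cref{l.equi}. One small imprecision: the condition is \emph{not} determined by a ball of radius $r+m+n$ uniformly, since the witness $y$ need not lie in $D(x,r+m)$; but as openness only requires some neighborhood around each point, letting the radius depend on $d(x,y)$ — as the paper does with $\mathcal{N}_{n+m+d(x,y),1/m}$ — costs nothing.

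The density step has a genuine gap in the bounded-degree case, and you flagged the weak spot without resolving it. You propose to graft the block $K_{X,x,\phi,n}$ onto $X$ at a far-away vertex $w$ with $\deg_X w<\Delta$, justifying the existence of $w$ by ``$X$ is infinite and $\deg X\le\Delta$.'' That implication is false: an infinite $\Delta$-regular graph (e.g.\ the $\Delta$-regular tree) has no vertex of free valence anywhere, so there is nowhere on $X$ to attach an edge without exceeding $\Delta$. Nor does $\Delta\ge 3$ help here; its actual role is to let the auxiliary structures fit inside $\gromov^\infty(\Delta)$ (the $\ZZ$-spine of $K$ has vertices of degree $3$, as does the hub of the gluing). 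The paper's construction sidesteps the problem by not grafting onto the infinite graph $X$ at all: it \emph{truncates} $X$ to the finite disk $D_X(x,\max\{n,l\})$ — truncation is what creates free valence, since a boundary vertex of the truncated disk that had a neighbor outside the disk has strictly smaller degree than in $X$, and such a vertex exists because $X$ is infinite and connected — and then assembles a new graph from this disk, the disk of radius $m$ around the basepoint of $K_{X,x,\phi,n}$, and an infinite ray (which restores membership in $\gromov^\infty$), joined at a single hub vertex. If you replace ``attach at a far vertex of degree $<\Delta$'' by ``cut $X$ outside a large ball and attach at a newly freed boundary vertex, adding a ray to keep the graph infinite,'' your argument closes up and essentially coincides with the paper's.
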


\begin{proof}
	We  prove first that $\widehat V(n,r,m)$ is open. Let $[X,x,\phi]\in \widehat V(n,r,m)$, so that there is some $y\in X$ such that $d(x,y)\geq r$ and there is an $(m,1/m)$-equivalence 
	\[
	f\colon (X,y,\phi) \rightarrowtail (K_{X,x,\phi,n}, k_{X,x,\phi,n},\psi_{X,x,\phi,n})\;.
	\]
	Then 
	\[
	\mathcal{N}_{n+m+d(x,y),1/m}[X,x,\phi]\subset \widehat{V}(n,r,m)\;.
	\] 
	Indeed, let $h\colon (X,x,\phi)\rightarrowtail (Z,z,\chi)$ be a $\big(n+m+d(x,y),1/m\big)$-equivalence. By the triangle inequality we have $D(y,n+m)\subset \dom h$, so the restriction $h|_{D(y,n+m)}\colon (X,y,\phi)\rightarrowtail (Z,z,\chi)$ is an $(n+m,1/m)$-equivalence. \Cref{l.composition} yields that 
	\[
	f\circ h^{-1}\colon (Z,z,\chi)\to ( K_{X,x,\phi,n},  k_{X,x,\phi,n}, \psi_{X,x,\phi,n})
	\]
	is an $(m,1/m)$-equivalence. Finally, we get $[Z,z,\chi]\in \widehat V(n,r,m)$ by \Cref{l.equi}.
	
	Let us prove that  $\widehat V(n,r,m)$ is dense in $\gromov^\infty$. Let $[X,x,\phi]\in \gromov$ and $l\in \mathbb{N}$. We will now define a graph $(l,1/l)$-equivalent to $[X,x,\phi]$ and contained in $\widehat V(n,r,m)$. Take the disjoint union of $(D_X(x,\max\{n,l\}),\phi)$, $(D_{ K_n}(k_n,m),\psi)$ and an arbitrarily colored infinite semi-ray $\mathbb{N}$. Then take points $y\in S_X(x,n)$ and $z\in S_{ K_n}( k_n,m)$ and add edges connecting  $y$ and $z$ to $0\in \mathbb{N}$.   It is trivial to check that such a graph satisfies the required conditions. To prove that $\widehat V(n,r,m)$ is dense in $\gromov(\Delta)$, note that $D_X(x,n)$ and $D_{\widetilde K_n}(\tilde k_n,m)$ are disks in infinite graphs of degree $\leq \Delta$. Therefore we can perform the same construction but choosing $y\in S_X(x,n)$ and $z\in S_{\widetilde K_n}(\tilde k_n,m)$ of degree $< \Delta$, and the resulting graph in $\widehat V(n,r,m)$ will have degree $\leq \Delta$.
	
	The same proof applies to $ V(n,r,m)$ by ignoring all references to colorings.
\end{proof}
 
 The following result is an immediate consequence of Lemmas~\ref{l:chaotic} and~\ref{l.kqt}, and \Cref{p:v}. It concludes the proof of \Cref{i.ac}.
 
\begin{prop}
	The sets  $\bigcap \nolimits_{n,r,m\in\mathbb{N}} \widehat V(n,m,r)$  and $\bigcap \nolimits_{n,r,m\in\mathbb{N}}V(n,m,r)$ are generic in $\gromov^\infty$ and $\mathcal{G}_*^\infty$, respectively. Their intersections with $\gromov^\infty(\Delta)$ and $\mathcal{G}_*^\infty(\Delta)$ are also generic. All these sets consist of almost chaotic clases.
\end{prop}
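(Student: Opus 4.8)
The plan is to treat the two assertions of the proposition—genericity of the four intersections, and almost-chaoticity of their members—separately, since the statement is really a packaging of the preceding lemmas.

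Genericity is essentially bookkeeping. By \Cref{p:v} each $\widehat V(n,r,m)$ is open and dense in $\gromov^\infty$, so the countable intersection $\bigcap_{n,r,m\in\mathbb{N}}\widehat V(n,r,m)$ is residual and hence generic by definition; since $\gromov^\infty$ is Polish by \Cref{l.polish}, the Baire Category Theorem further guarantees that the intersection is dense. The identical argument applies to $\bigcap V(n,r,m)$ in $\mathcal{G}_*^\infty$ and to the traces of these families in the compact spaces $\gromov^\infty(\Delta)$ and $\mathcal{G}_*^\infty(\Delta)$, where \Cref{p:v} again supplies open dense sets.

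For almost-chaoticity I would fix a class $[X,x,\phi]\in\bigcap_{n,r,m}\widehat V(n,r,m)$ and verify the criterion of \Cref{l:chaotic} using the explicit quasi-transitive models $(Y_n,y_n,\psi_n):=(K,k,\psi)_{X,x,\phi,n}$, whose quasi-transitivity is exactly \Cref{l.kqt}. Condition (b) of \Cref{l:chaotic} is then immediate: for each pair $(n,m)$, membership $[X,x,\phi]\in\widehat V(n,r,m)$ (for any fixed $r$, say $r=0$) furnishes a vertex $x_{n,m}=y$ with $[X,x_{n,m},\phi]$ being $(m,1/m)$-equivalent to $[K_{X,x,\phi,n},k,\psi]=(Y_n,y_n,\psi_n)$; note that this witness depends on $n$ alone, precisely as \Cref{l:chaotic} requires.

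The only point demanding genuine verification—and thus the main, if modest, obstacle—is condition (a), namely that $(X,x,\phi)$ is $(n,1/n)$-equivalent to $(K,k,\psi)_{X,x,\phi,n}$. I would read this off the construction by checking that $D_K(k,n)$ coincides with $D_X(x,n)$ as a pointed colored graph: any path in $K$ escaping the central copy must first traverse the added edge $y_0\to 0\in\ZZ$ and therefore reaches distance $n+1$ from $k$, so no external vertex enters the $n$-ball, while inside the copy the $X$-geodesics remain in the ball. This yields an exact match of colors, i.e.\ even $(n,0)$-equivalence. With (a) and (b) established, \Cref{l:chaotic} gives that $(X,\phi)$ is almost chaotic. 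The degree-bounded statements require no new idea, since membership supplies the same equivalences, and the uncolored cases follow verbatim via the constant-coloring version of \Cref{l:chaotic} recorded after its statement.
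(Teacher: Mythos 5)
Your proposal is correct and follows exactly the route the paper intends: the paper states this proposition as an immediate consequence of \Cref{l:chaotic}, \Cref{l.kqt} and \Cref{p:v}, and your argument simply fills in those details (residuality of a countable intersection of open dense sets, and verification of conditions (a) and (b) of \Cref{l:chaotic} with $(Y_n,y_n,\psi_n)=(K,k,\psi)_{X,x,\phi,n}$). Your check that $D_K(k,n)$ is an exact pointed colored copy of $D_X(x,n)$ is the right justification for condition (a), which the paper leaves implicit.
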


Let us now proceed with the proof of \Cref{i.ap}. Let
\[
W(n)= \{\, [X,x,\phi] \in \gromov^\infty\mid \forall x, y\in D(x,n), \ x\neq y\Rightarrow \phi(x)\neq \phi(y)    \,\}\;.
\]
\begin{prop}\label{p.w}
	$W(n)$ and $W(n)\cap \gromov^\infty(\Delta)$  are  open, dense subsets of $\gromov^\infty$ and $\gromov^\infty(\Delta)$, respectively.
\end{prop}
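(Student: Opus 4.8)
The plan is to treat openness and density separately in each of the two ambient spaces, exploiting the fact that membership in $W(n)$ is a condition on the coloring alone: it asserts exactly that $\phi$ is injective on the finite disk $D(x,n)$. The key input for openness is that $D(x,n)$ is finite, so the pairwise color distances attain a positive minimum, combined with the ultrametric property of $d$ on $\Xi$; the key input for density is that $\Xi$ is perfect, so any color may be perturbed by an arbitrarily small amount into a new, distinct value. Since in both steps I will never alter the underlying graph, the bounded-degree assertions will follow from the unbounded ones at no extra cost.

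For openness, I would fix $[X,x,\phi]\in W(n)$ and set
\[
\delta=\min\{\,d(\phi(u),\phi(v))\mid u,v\in D(x,n),\ u\neq v\,\}\;,
\]
which is strictly positive because $D(x,n)$ is finite and $\phi$ is injective there. Then I would pick $R\geq n$ with $1/R<\delta$ and claim that $\mathcal{N}_{R,1/R}[X,x,\phi]\subseteq W(n)$. Given an $(R,1/R)$-equivalence $h\colon (X,x,\phi)\rightarrowtail (Y,y,\psi)$, the map $h$ is a pointed graph isomorphism $D(x,R)\to D(y,R)$; as it fixes the basepoint it preserves distances to it, hence restricts to a bijection $D(x,n)\to D(y,n)$. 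For distinct $u,v\in D(x,n)$ one has $d(\phi(u),\psi(h(u)))<1/R<\delta\leq d(\phi(u),\phi(v))$, so two applications of the ultrametric inequality yield $d(\psi(h(u)),\psi(h(v)))=d(\phi(u),\phi(v))\geq\delta>0$. Thus $\psi$ is injective on $D(y,n)$ and $[Y,y,\psi]\in W(n)$. Intersecting this neighborhood with $\gromov^\infty(\Delta)$ gives openness in the bounded-degree space as well.

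For density, given $[X,x,\phi]\in\gromov^\infty$ and a target precision $l$, I would keep $X$ fixed and only recolor. Using that $\Xi$ is perfect and that $D(x,\max\{n,l\})$ is finite, I would choose a coloring $\psi$ agreeing with $\phi$ outside $D(x,\max\{n,l\})$, satisfying $d(\psi(u),\phi(u))<1/l$ for all $u\in D(x,l)$, and taking pairwise distinct values on $D(x,n)$; the latter is possible since vertices of $D(x,n)$ outside $D(x,l)$ may be recolored freely and those inside may be nudged within $1/l$. The identity then witnesses that $[X,x,\psi]$ is $(l,1/l)$-equivalent to $[X,x,\phi]$, while $\psi$ is injective on $D(x,n)$, so $[X,x,\psi]\in W(n)$; and since $X$ is unchanged, $[X,x,\psi]$ remains in $\gromov^\infty$, and in $\gromov^\infty(\Delta)$ whenever $[X,x,\phi]$ was. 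This gives density in both ambient spaces.

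The main obstacle, though a mild one, is the bookkeeping in the openness step: one must check that $h$ genuinely carries $D(x,n)$ onto $D(y,n)$ (which holds because a pointed graph isomorphism of disks preserves distances to the basepoint), and then chain the ultrametric inequalities correctly, using the strict separation $1/R<\delta$ to force the \emph{equality} $d(\psi(h(u)),\psi(h(v)))=d(\phi(u),\phi(v))$ rather than a mere lower bound. Everything else reduces to the perfection of $\Xi$ and the finiteness of disks, both supplied by the preliminaries.
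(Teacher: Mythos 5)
Your proof is correct and follows essentially the same route as the paper: openness comes from the positive minimum of pairwise color distances on the finite disk $D(x,n)$ together with a distance comparison, and density comes from perturbing the coloring on a finite disk using the perfection of $\Xi$, without touching the underlying graph. The only cosmetic difference is that you invoke the ultrametric isosceles property where the paper runs an $\epsilon/3$ triangle-inequality argument; your density step is also stated with cleaner quantifiers than the paper's, but the idea is identical.
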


\begin{proof}
	Let us prove that $W(n)$ is open. Let $[X,x,\phi]\in W(n)$ and
	\[
	\epsilon = \inf \{\,d(\phi(y),\phi(z))\mid y,z\in D(x,n),\ y\neq z \,\}>0\;.
	\]
	Then 
	\[\mathcal{N}_{n,\epsilon/3}[X,x,\phi]\cap \gromov^\infty\subset W(n)\;.
	\]
	Indeed, let $h\colon (Y,y,\psi)\rightarrowtail (X,x,\phi)$ be an $(n,\epsilon/3)$-equivalence, and suppose by absurdity that there are $u,v\in D(y,n)$, $u\neq v$ with $\psi(u)=\psi(v)$. Then we get
	\[
	d(\phi(h(u)),\phi(h(v)))\leq d(\phi(h(u)),\psi(u)) + d(\phi(h(v)),\psi(v)) + d(\psi(u),\psi(v))\leq \frac{2\epsilon}{3}<\epsilon\;, 
	\]
	by the definition of  $(n,\epsilon/3)$-equivalence and the triangle inequality. This clearly contradicts the definition of $\epsilon$.
	  
	To show that $W(n)$ is dense, let $[X,x,\phi]\in W(n)$ and modify the coloring $\phi$ in the following way: for every $y\in D(x,n)$, choose $\hat \phi (y)\in B_\Xi(\phi(y),\epsilon)\setminus \{\phi(y)\}$ so that the restriction of $\hat \phi$ to $D(x,n+r)$ is injective. Clearly this implies $[X,x,\hat \phi]\in W(n)$, and $[X,x,\hat \phi ]$ is $(m,\epsilon)$-equivalent to $[X,x,\phi]$ for every $m\in \mathbb{N}$. Since $[X,x,\hat \phi]\in \gromov^\infty(\Delta)$ if $[X,x,\phi]\in \gromov^\infty(\Delta)$, it follows that   $W(n)\cap \gromov^\infty(\Delta)$ is dense in $\gromov^\infty(\Delta)$.
\end{proof}

\begin{cor}\label{c.w}
	The set $\bigcap \nolimits_{n\in\mathbb{N}} W(n) $ consists of aperiodic classes.
\end{cor}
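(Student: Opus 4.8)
The plan is to unwind the definition of the intersection and observe that it consists precisely of the classes carrying a \emph{globally} injective coloring, after which aperiodicity is immediate from the very definition of a colored-graph automorphism.

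First I would fix a class $[X,x,\phi]\in\bigcap_{n\in\mathbb{N}}W(n)$ and record what membership means: for every $n$ the restriction $\phi|_{D(x,n)}$ is injective. Since $X$ is connected, every vertex lies at finite distance from $x$, so the disks exhaust $X$, that is, $X=\bigcup_{n\in\mathbb{N}}D(x,n)$. Given two distinct vertices $u,v\in X$, both lie in a common disk $D(x,n)$ for $n\geq\max\{d(x,u),d(x,v)\}$, and injectivity of $\phi|_{D(x,n)}$ forces $\phi(u)\neq\phi(v)$. Hence $\phi\colon X\to\Xi$ is injective on all of $X$.

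Next I would invoke the definition of an automorphism of $(X,\phi)$: any $f\in\Aut(X,\phi)$ is a graph automorphism with $\phi=\phi\circ f$, so $\phi(f(v))=\phi(v)$ for every vertex $v$. Global injectivity of $\phi$ then yields $f(v)=v$ for all $v$, whence $f=\id$ and $\Aut(X,\phi)=\{\id\}$. By definition this says $(X,\phi)$ is aperiodic; since aperiodicity is invariant under pointed isomorphism, it is genuinely a property of the class $[X,x,\phi]$, which proves the claim.

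There is essentially no obstacle here: the substantive work, namely showing that each $W(n)$ is open and dense, was already carried out in \Cref{p.w}, and the corollary merely identifies the content of their intersection. The only points requiring a moment of care are the exhaustion step $X=\bigcup_{n}D(x,n)$, which is exactly where connectedness (together with finiteness of vertex degrees, ensuring each disk is finite) enters, and the observation that the basepoint-dependent conditions defining the $W(n)$ nonetheless pin down a basepoint-free property of $(X,\phi)$.
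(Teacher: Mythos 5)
Your proof is correct and is essentially the same argument as the paper's: both reduce the claim to the fact that $\phi$ is injective on every disk $D(x,n)$, hence globally injective, so a color-preserving automorphism must be the identity. The only difference is presentational (you argue directly via global injectivity, while the paper phrases it as a contradiction by locating two distinct vertices $y,z$ with $f(y)=z$ inside $D(x,\max\{d(x,y),d(x,z)\})$).
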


\begin{proof}
	Let $[X,x,\phi]\in \bigcap \nolimits_{n\in\mathbb{N}} W(n) $, and suppose by absurdity that there is some non-trivial $f\in \Aut(X,\phi)$. Then there are $y,z\in X$, $y\neq z$, such that $f(y)=z$. But this yields $\phi(y)=\phi(z)$, which in turn implies 
	\[
	[X,x,\phi]\notin W(\max\{d(x,y),d(x,z)\})\;,
	\]a contradiction.
\end{proof}

\begin{figure}[tb]
\includegraphics{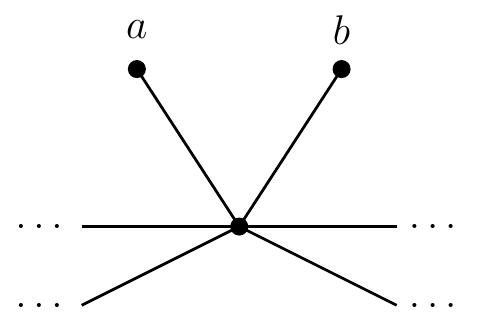}
\caption{In any graph that contains such a pattern, there is a non-trivial isomorphism given by the interchange of $a$ and $b$.}
\label{f.ap}
\end{figure}

This concludes the proof of \Cref{i.ap}. Note that for this result we do not need the assumption $\Delta \geq 3$. The corresponding result is not true if we restrict our attention to non-colored graphs. It is clear that graphs exhibiting the local pattern shown in \Cref{f.ap} cannot be approximated by aperiodic graphs. The same counterexample applies for colored graphs if the space of possible values has an isolated point $\xi$ by using that color to decorate both $a$ and $b$. 

To finish the proof of \Cref{i.ch}, note that for infinite graphs aperiodic implies non-quasi-transitive. Therefore both the set 
\[
\bigcap \limits_{n,m,r\in \mathbb{N}} \widehat{V}(n,m,r)\cap \bigcap \limits_{n\in\mathbb{N}} W(n)
\] 
and its intersection with $\gromov^\infty(\Delta)$, $\Delta\geq 3$, consist of chaotic classes and are generic in $\gromov^\infty$ and $\gromov^\infty(\Delta)$, respectively.

\section{An example of a chaotic colored graph}

The colored graph $[X,\phi]$ defined in \Cref{ex:dense} satisfies $\overline{[X,\phi]}=\gromov^\infty$, so that it being chaotic can be seen as a weakening of \Cref{i.ac}. To prove the density of quasi-transitive classes we take arbitrary finite patterns and embed them in a periodic configuration, as illustrated in Figure~\ref{f.ch}. The same ideas can be used to obtain less trivial examples. In this section we detail the construction of a chaotic graph whose closure is the family of classes $[Y,\psi]$ with every vertex $y\in Y$ satisfying $\deg (y)=1$ or $3$.

Let $C\subset \Xi$ be a  countable, dense subset, and let $\mathcal{F}\subset \gromov$ consist of the classes $[Y,y,\psi]$ such that
\begin{enumerate}[(i)]
    \item \label{i.exap} $[Y,y,\psi]$ is aperiodic,
    \item $Y$ is a finite grpah,
    \item $\im(\psi)\subset C$,
    \item  $\deg(z)=1$ or $3$ for every $z\in Y$, and
    \item \label{i.degy}$\deg (y) = 1$.
\end{enumerate}
Let $F$ be a set of colored graphs that contains exactly one representative for each class in $\mathcal{F}$. Since $F$ is countable, we can choose an enumeration of the form $F=\{(Y_z,y_z,\psi_z)\}_{z\in \ZZ}$. Let $\chi\colon \ZZ\to C$ be an aperiodic coloring, and let $X$ be the colored graph constucted as follows: take the disjoint union of $(\ZZ,\chi)$ and the family $\{ (Y_{z},y_{z},\psi_{z}) \}_{z \in \mathbb{Z}}$, and add an edge joining $y_{z}$ to $z \in \mathbb{Z}$ for every $z\in \ZZ$ (see \Cref{fig:Hn}).  Since  the colored graphs $(Y_{z},y_{z},\psi_{z})$ are aperiodic, the proof that $(X,\phi)$ is aperiodic follows exactly as in \Cref{ex:dense}. 

\begin{figure}[htbp]
 \begin{minipage}{0.50\hsize}
  \begin{center}
   	\includegraphics[width=\columnwidth]{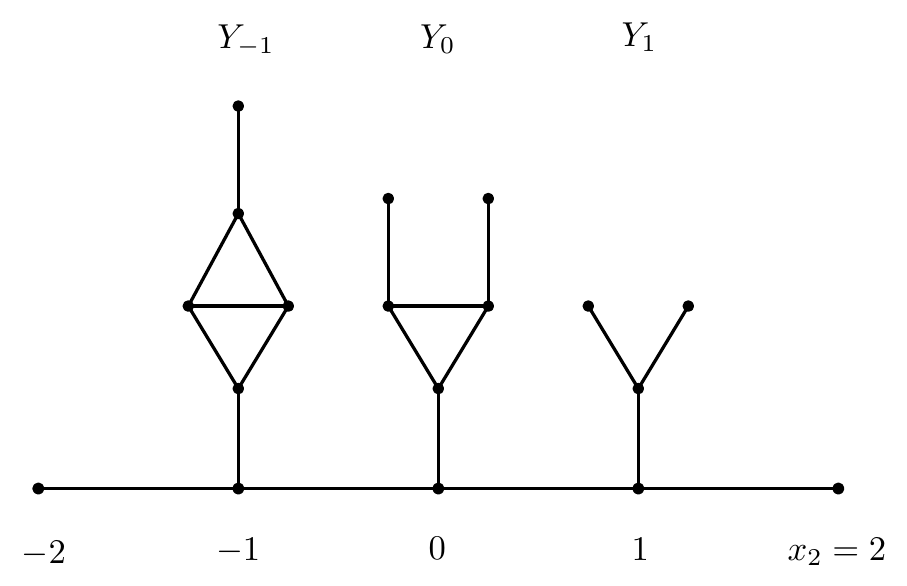}
  \end{center}
  \caption{An example of the graph $H_{2}$}
  \label{fig:Hn}
 \end{minipage}
     \hspace{20pt}
 \begin{minipage}{0.40\hsize}
  \begin{center}
  	\includegraphics[width=\columnwidth]{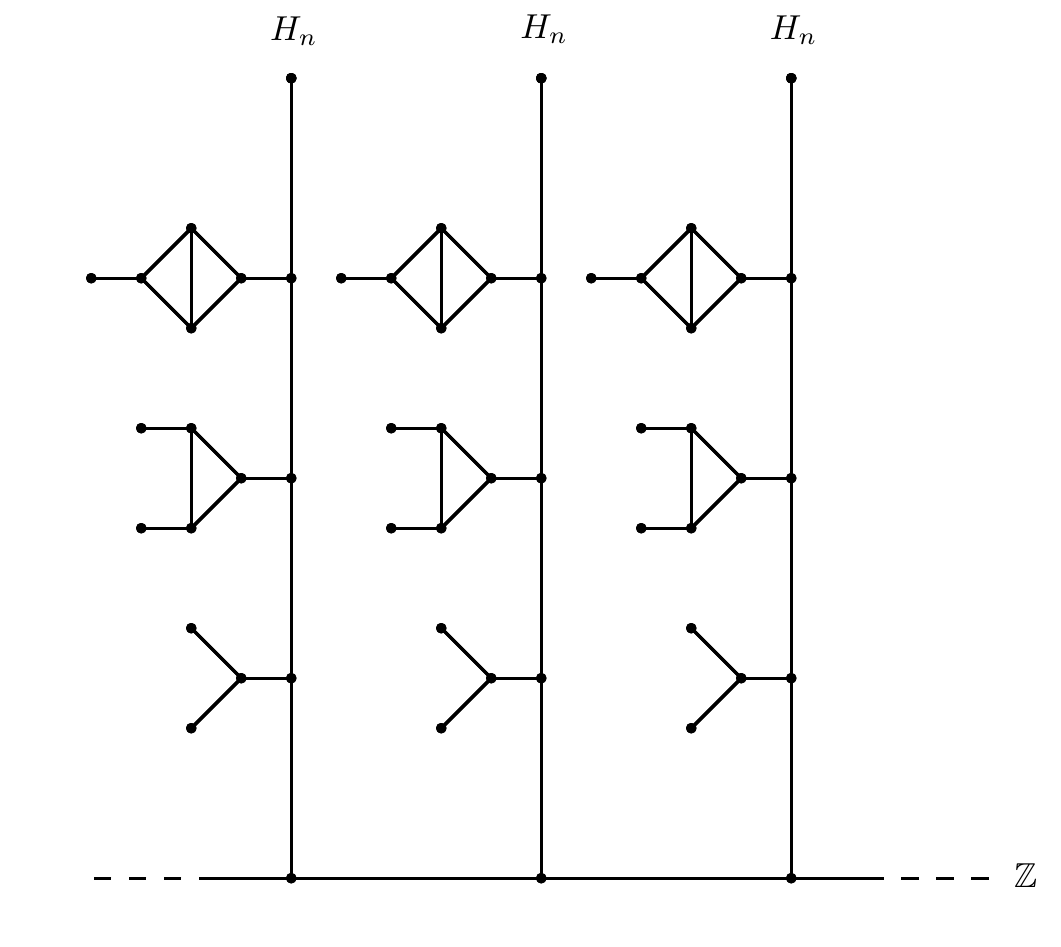}
  \end{center}
  \caption{An example of the graph $Z_{2}$}
  \label{fig:Zn}
 \end{minipage}
\end{figure}

Let us prove that $(X,\phi)$ is almost chaotic. Let $x_0:=0\in \ZZ\subset X$. For $n\in \NN$, let $(H_n,\phi_n)$ be the colored subgraph of $(X,\phi)$ with vertex set 
\[
\{-n,-n+1,\ldots,n\}\cup\bigcup \limits_{-n<z<n } Y_z \;,
\]
where  by $\{-n,-n+1,\ldots,n\}$ we mean the obvious subset of the copy of $\ZZ$ contained inside $X$. Note that $D(x_0,n)\subset H_n$. Let $x_n=n\in H_n$. Then $(H_n,x_n,\phi)$ satisfies conditions~\ref{i.exap}--\ref{i.degy}.
Let us embed $(H_n,\phi)$  into a quasi-transitive graph $(Z_n,\xi_n)$ as follows: the disjoint union of a $\ZZ$-indexed family of copies of $(H_n,\phi)$, with vertex sets denoted by $H_{n,z}$, and a single copy of $\ZZ$, and identify the point corresponding to $x_n$ in $H_{n,z}$ with $z\in \ZZ$ (see \Cref{fig:Zn}). The proof that $(Z_n,\xi_n)$ is quasi-transitive proceeds as in \Cref{l.kqt}. Also $(Z_n,\xi_n)$ contains and isomorphic copy of $(D(x_0,n),\phi)$ by construction. For each $m\in \NN$, the colored subgraph of $(Z_n,\xi_n)$ with vertex set 
\[
\{-m,-m+1,\ldots,m\}\cup\bigcup \limits_{-m<z<m } H_{n,z}
\]
satisfies again conditions~\ref{i.exap}--\ref{i.degy}. This means that we can find an isomorphic copy inside $(X,\phi)$, and thus $[Z_n,\chi_n]\subset \overline{[X,\phi]}$. To recap, we have proved that for each disk $D(x,n)$ we can find a quasi-transitive graph $(Z_n,\chi_n)$ containing a copy of $(D(x,n),\phi)$ and $[Z_n,\chi_n]\subset \overline{[X,\phi]}$. Therefore $(X,\phi)$ is almost chaotic and aperiodic, hence chaotic.
 

\subsection*{Acknowledgements:}
The first author is supported by a ``Canon Foundation in Europe Research Fellowship" and would like to thank the hospitality of Ritsumeikan University. The second author is supported by JSPS KAKENHI Grant Number 17K14195. Both authors were partially supported by FEDER/Ministerio de Ciencia, Innovaci\'{o}n y Universidades/AEI/MTM2017-89686-P.

\end{document}